\DeclareMathOperator{\Rep}{Rep}     
\DeclareMathOperator{\Spec}{Spec}   
\newcommand{\bN}{\mathbb{N}}        
\newcommand{\sF}{\mathcal{F}}       
\newcommand{\sH}{\mathcal{H}}       
\newcommand{\sO}{\mathcal{O}}       
\newcommand{\sT}{\mathcal{T}}       
\renewcommand{\geq}{\geqslant}      
\theoremstyle{definition}     
\newtheorem{defi}{Definition}[section]
\newtheorem{remark}[defi]{Remark}
\newtheorem{definition}[defi]{Definition}
\theoremstyle{plain}
\newtheorem{theorem}[defi]{Theorem}
\newtheorem{corollary}[defi]{Corollary}
\newtheorem{lemma}[defi]{Lemma}
\newtheorem{proposition}[defi]{Proposition}
\title{Extension of torsors and prime to $p$ fundamental group scheme}
\author{Marco Antei}
\author{Jimmy Calvo-Monge}
\begin{document}

\maketitle
\begin{abstract}
    Let $R$ be a discrete valuation ring. Let $X$ be a proper and faithfully flat $R$-scheme, endowed with a section $x \in X(R)$, with integral and normal fibres. Let $f: Y \rightarrow X_{\eta}$ be a finite Nori-reduced $G$-torsor. In this paper we provide a useful criterion to extend $f: Y \rightarrow X_{\eta}$ to a torsor over $X$. Furthermore in the particular situation where $R$ is a complete discrete valuation ring of residue characteristic $p>0$ and $X\to \Spec(R)$ is smooth we apply our criterion to prove that the natural morphism 
    $\psi^{(p')}: \pi(X_{\eta},x_{\eta})^{(p')}\to \pi(X,x)_{\eta}^{(p')}
$
between the prime-to-$p$ fundamental group scheme of $X_{\eta}$ and the generic fibre of the prime-to-$p$ fundamental group scheme of $X$ is an isomorphism. This generalizes a well known result for the étale fundamental group. The methods used are \emph{purely} tannakian.
\end{abstract}

    \textbf{Mathematics Subject Classification. Primary: 14L30, 14L15. Secondary: 11G99.} \\
    \textbf{Key words}: torsors, affine group schemes, models, prime to $p$ torsors.
    
    \tableofcontents
    
\section{Introduction}
Let $X$ be a finite type scheme, faithfully flat over the spectrum of a Dedekind ring $R$ with fraction field $K$. Let $X_{K}$ denote its generic fibre and let $f:Y\to X_{K}$ be a $G$-torsor, with $G$ an affine and algebraic $K$-group scheme. The question of finding a \emph{model} for $f:Y\to X_{K}$, i.e. a torsor over $X$ whose generic fibre is isomorphic to  $f$, is as old as the famous \emph{Théorie de la spécialisation du groupe fondamental} introduced by Grothendieck in \cite[Expos\'e X]{SGA}. An historical overview of some of the most recent attempts and results toward a general solution can be found in many of the papers mentioned in the bibliography and several others, so it will not be repeated in this introduction. 

In this article we are providing an useful criterion to extend finite pointed torsors. In few words we prove that in order for the torsor $Y\to X_{K}$ to be extended to $X$ it is sufficient (and uninterestingly necessary) to extend it with a finite and faithfully flat morphism $T\to X$ with some few extra assumptions which are often satisfied even when $T\to X$ is not a torsor. To show the \emph{usefulness} of our criterion we will provide an interesting application of our main result to the base change behavior of the fundamental group scheme, as defined in  \cite{Nori76}, \cite{Nori82} and \cite{AEG20} (cf. also \cite{HdS19} for a more recent construction). This will be explained at the end of this introduction. The malleability of our criterion will also hopefully lead to new \emph{extension} results in the near future.

As usual throughout the whole paper we will denote by $X_B$ (resp. $g_B$) the pullback over the $A$-algebra $B$ of the $A$-scheme $X$ (resp. of the morphism between $X$-schemes $g$), whenever $A$ is a ring. Also, whenever we will consider a $G$-torsor $f: Y \to X_K$, we will always assume, if not stated otherwise, that $G$ is finite, that $f: Y \to X_K$ is Nori-reduced (or \emph{Galois}), meaning that $\mathcal{O}_Y(Y)=K$, and that   it comes with a point $y \in Y(\Spec(K))$ over $x_K$ where $x\in X(R)$ is a given section. Finally we will always assume $X$ to be proper over $R$ and that its fibres are integral and normal.  Under these assumptions we will prove the following result. (See the next figure for an illustration).

\begin{theorem}\label{teoGENERAL}
Let $R$ be a complete DVR (i.e. discrete valuation ring). Suppose there exist a finite field extension $K^{\prime}/K$ and a finite and faithfully flat morphism $\varphi: Z \rightarrow X$ such that $\varphi^{*}\varphi_{*}\mathcal{O}_Z$ is a free $\mathcal{O}_Z$-module and $\varphi_K= \lambda \circ f_{K^{\prime}}$ where $\lambda: X_{K'} \to X_K$ and $f_{K^{\prime}}:Y_{K^{\prime}}\to X_{K'}$ are the natural pullback morphisms. If moreover $\sO_Z(Z)=R'$ where $R'$ is the integral closure of $R$ in $K'$ then, there is a $M$-torsor $f_1: Y_1 \rightarrow X$, for some quasi-finite affine and flat group scheme $M$ over $R$, whose generic fibre equals the $G$-torsor $f: Y \rightarrow X_K$.
\end{theorem}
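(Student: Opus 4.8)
The plan is to argue \emph{tannakially}: I will manufacture the desired group scheme $M$ and the torsor $Y_1$ as the Tannakian dual, respectively the universal torsor, of the tensor category generated over $R$ by the single locally free sheaf $W:=\varphi_{*}\sO_Z$. Since $\varphi$ is finite and faithfully flat, $W$ is a vector bundle on $X$; moreover the flat base change identity $\varphi^{*}\varphi_{*}\sO_Z\cong (p_2)_{*}\sO_{Z\x_X Z}$ shows that the hypothesis ``$\varphi^{*}\varphi_{*}\sO_Z$ free'' means precisely that $W$ is \emph{trivialised} by the finite faithfully flat cover $\varphi$. This is the integral analogue of the classical fact that a bundle becomes essentially finite as soon as it is trivialised on a finite cover, and it is the property that will force everything in sight to be finite.

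First I would check that $W$, together with all the subquotients of its mixed tensor powers $W^{\ox m}\ox (W^{\vee})^{\ox n}$, lies in the Tannakian category of bundles over which the fundamental group scheme of a proper flat $R$-scheme with integral normal fibres is defined in \cite{AEG20}; the key point, to be extracted from the freeness hypothesis, is that each such subquotient remains trivialised by (a finite cover dominating) $\varphi$, so that the whole category $\sC:=\langle W\rangle$ is ``finite''. Applying Tannaka duality over $R$ with respect to the fibre functor $\om_x\colon V\mapsto x^{*}V$ attached to the section $x$, the category $\sC$ is reconstructed as $\Rep_R(M)$ for a flat affine $R$-group scheme $M$, and the reconstruction simultaneously produces an $M$-torsor $f_1\colon Y_1\to X$ (the $\underline{\Iso}^{\ox}$-torsor of fibre functors), pointed above $x$. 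Since every object of $\sC$ is trivialised by the single cover $\varphi$, the fibres of $M$ over $\Spec(R)$ are bounded, and together with the computation of $M_K$ below this yields that $M$ is quasi-finite.

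It then remains to identify the generic fibre. Using $\varphi_K=\la\circ f_{K'}$ and $Z_K\cong Y_{K'}$ one computes, by flat base change along $K'/K$, that
\[
W_K \;=\; \la_{*}\bigl(f_{K',*}\sO_{Y_{K'}}\bigr)\;\cong\;(f_{*}\sO_Y)^{\oplus [K':K]},
\]
i.e. $W_K$ is a direct sum of copies of the regular-representation bundle of the $G$-torsor $f$. As $f$ is Nori-reduced, the regular representation generates $\Rep_K(G)$, so $\sC_K=\langle W_K\rangle\simeq\Rep_K(G)$ and hence $M_K\cong G$; the pointed Nori-reduced correspondence then forces the generic fibre of $f_1\colon Y_1\to X$ to be exactly $f\colon Y\to X_K$. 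The hypothesis $\sO_Z(Z)=R'$ enters here: it guarantees that $W$ is ``connected'' in the right sense (its generic ring of global sections being just $K'$, matching $\sO_Y(Y)=K$), which is what pins $M_K$ down to $G$ itself rather than a proper subquotient and keeps $f_1$ Nori-reduced.

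The main obstacle I anticipate is the membership-and-reconstruction step: passing from ``trivialised by a finite faithfully flat, but non-torsor, cover $\varphi$'' to genuine membership in the Tannakian category of \cite{AEG20}, and in particular guaranteeing that the reconstructed $M$ is $R$-flat and quasi-finite. Over a field this is Nori's theorem on essentially finite bundles, but over the DVR $R$ one must control the flatness of the possibly degenerating subquotients and of $M$ itself; this is precisely where the completeness of $R$, the properness of $X$ and the normality and integrality of the fibres will be used.
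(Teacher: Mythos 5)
Your overall strategy (dualize, over $R$, a tensor category of bundles trivialized by $\varphi$, then recover the torsor Nori-style and identify the generic fibre) is indeed the paper's strategy, and your computation $W_K\cong (f_{*}\sO_Y)^{\oplus[K':K]}$ is correct and appears implicitly in the paper as well. But there is a genuine gap, and it is exactly the point you yourself flag as ``the main obstacle'': your proposal supplies no mechanism for making Tannaka duality over $R$ applicable, and that mechanism is the actual mathematical content of the theorem. Concretely, your claim that every subquotient of the mixed tensor powers of $W=\varphi_{*}\sO_Z$ ``remains trivialised by (a finite cover dominating) $\varphi$'' is unjustified and, as stated, false over a DVR: subquotients of free modules acquire torsion, so they need not even be $R$-flat, let alone locally free or trivialized. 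For this reason the paper does \emph{not} work with the abelian subcategory generated by $W$; it works with the larger category $\sT$ of \emph{all} vector bundles trivialized by $\varphi$, viewed as a Tannakian \emph{lattice} in the sense of Duong--Ho Hai (Theorem~\ref{2.3.2DH}), where one only needs kernels and images, not all subquotients. The heart of the proof is then Proposition~\ref{nucleoseimagenes}: kernels and images of morphisms in $\sT$ stay in $\sT$. This is where the hypotheses you never really use come in: completeness of $R$ forces $R'$ to be a complete DVR, the Stein factorization $Z\to\Spec(R')\to\Spec(R)$ together with the valuative criterion of properness produces a section $t\:\Spec(R')\to Z$ extending $y_{K'}$, and the descent Lemma~\ref{plenfiel} then shows that every morphism of free $\sO_Z$-modules is pulled back from free $R'$-modules, whose kernels and images are free. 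In particular the hypothesis $\sO_Z(Z)=R'$ is what identifies the Stein factorization; its role is this structural one, not (as you suggest) merely to pin down $M_K$.

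Two further steps are elided. First, to conclude $M_K\cong G$ it is not enough that $W_K$ generates $\Rep_K(G)$: one must show that the scalar extension $\sT_K$ of the integral category is equivalent, under restriction to the generic fibre, to the category generated by $f_{*}\sO_Y$, i.e.\ that every bundle on $X_K$ trivialized by $f$ \emph{extends} to an object of the integral category. The paper proves this essential surjectivity via the unique flat prolongation of quotients (Proposition~\ref{propA}, i.e.\ EGA~IV~2.8.1) combined with the kernel/image machinery above (Lemmas~\ref{propB} and~\ref{propC}). Second, duality over $R$ requires verifying the fibre functor axioms, notably that $x^{*}$ is fully faithful on trivial objects; the paper's Proposition~\ref{contruccionomega} proves this by showing trivial objects are exactly the free $\sO_X$-modules, using essential finiteness on the special fibre, semicontinuity, and Grauert's theorem. (Your appeal to ``bounded fibres'' for quasi-finiteness is likewise vague; the paper gets affineness of $M$ from the lattice duality and quasi-finiteness from flatness plus finiteness of $M_K\cong G$.) Without these ingredients the ``reconstruction'' you invoke has no theorem behind it, so the proposal, while pointed in the right direction, does not yet constitute a proof.
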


 
\begin{figure}[H]
    \centering
    \begin{align}
    \xymatrixrowsep{0.3in}
        \xymatrix{
        Y^{\prime} \ar[d]_{f_{K^{\prime}}} \ar[rd]_{\alpha} \ar[r]^{\widetilde{j}} & Z \ar[ddr]^{\varphi} & \\
        X_{K^{\prime}}  \ar[d]_{p_{K^{\prime}}} \ar[rd]_{\lambda} & Y \ar[d]_f  &  \\
        \text{Spec}(K^{\prime}) \ar[rd] & X_K  \ar[r]_j \ar[d]_{p_K} & X \ar[d]^p \\
        & \text{Spec}(K) \ar[r] & \text{Spec}(R).
        }
    \end{align}
    \caption{Hypotheses of Theorem \ref{teoGENERAL}.}
    \label{figura1}
\end{figure}

To prove this the reader will see that the hypothesis on $R$ being a complete DVR will be needed only to ensure that $R'$, the integral closure of $R$ in $K'$, is also a DVR. In consequence, if we find ourselves in the situation of Theorem \ref{teoGENERAL} with $K=K'$ then we would have that $R'=R$ is indeed a DVR and thus the hypothesis that $R$ is complete can be dropped. This gives us the following corollary, which is often useful when we already know how to \emph{close} $Y\to X_K$ before pulling it up over some finite field extension.

\begin{corollary}\label{corGENERAL}
Let $R$ be a (not necessarily complete) DVR and $X$, $Y$, $x$ as in Theorem \ref{teoGENERAL}. Assume $Y\to X_K$ can be extended to some finite and faithfully flat morphism $\varphi: Z \rightarrow X$ such that  $\varphi^{*}\varphi_{*}\mathcal{O}_Z$ is a free $\mathcal{O}_Z$-module. Then there exists a $M$-torsor $f_1: Y_1 \rightarrow X$, for some quasi-finite affine and flat group scheme $M$ over $R$, whose generic fibre equals the $G$-torsor $f: Y \rightarrow X_K$.
\end{corollary}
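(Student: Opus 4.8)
The plan is to read the Corollary off directly from Theorem \ref{teoGENERAL}, specialized to the degenerate case $K'=K$, exploiting the observation recorded just before the statement: in the proof of Theorem \ref{teoGENERAL} the completeness of $R$ serves \emph{only} to guarantee that the integral closure $R'$ of $R$ in $K'$ is again a DVR. Hence the whole task reduces to checking that, when $K'=K$, all the hypotheses of the theorem are met with $R$ itself playing the role of (the necessarily DVR) $R'$.

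First I would set $K'=K$ and trivialize the pullback data. Then $\lambda\colon X_{K'}\to X_K$ is the identity, $f_{K'}$ is just $f$, and the compatibility requirement $\varphi_K=\lambda\circ f_{K'}$ of Theorem \ref{teoGENERAL} collapses to $\varphi_K=f$ --- which is exactly the standing assumption that $\varphi\colon Z\to X$ extends the torsor $f\colon Y\to X_K$. Since $R$ is a DVR it is integrally closed in its fraction field $K$, so the integral closure $R'$ of $R$ in $K'=K$ equals $R$; in particular $R'$ is a DVR with no completeness hypothesis whatsoever, which is precisely what the proof of Theorem \ref{teoGENERAL} needs.

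The one point still to verify --- and the step I expect to carry the real content --- is the remaining hypothesis $\sO_Z(Z)=R'=R$, which in the Corollary is \emph{not} assumed and must therefore be deduced. Because $\varphi$ is finite and faithfully flat while $X$ is proper and flat over $R$, the structure morphism $Z\to\Spec(R)$ is proper and flat; consequently $\sO_Z(Z)$ is a finite $R$-algebra (finiteness of cohomology over the Noetherian base $R$) and is $R$-torsion free. Flat base change along the localization $R\to K$ then yields $\sO_Z(Z)\otimes_R K\cong\sO_{Z_K}(Z_K)=\sO_Y(Y)$, and the latter equals $K$ because $f$ is Nori-reduced. Torsion-freeness identifies $\sO_Z(Z)$ with a finite $R$-subalgebra of $K$; being finite it is integral over $R$, and since $R$ is integrally closed in $K$ we get $\sO_Z(Z)\subseteq R$, hence $\sO_Z(Z)=R$.

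At this stage every hypothesis of Theorem \ref{teoGENERAL} holds with $R'=R$, so the theorem delivers a quasi-finite affine and flat group scheme $M$ over $R$ together with an $M$-torsor $f_1\colon Y_1\to X$ whose generic fibre is $f\colon Y\to X_K$, as required. The only delicate moment is the identification $\sO_Z(Z)=R$; once the pullback over $K'=K$ has been trivialized, everything else is immediate.
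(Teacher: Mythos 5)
Your proof is correct and follows exactly the route the paper intends: the remark preceding the corollary explains that completeness of $R$ is used in Theorem \ref{teoGENERAL} only to ensure that $R'$ is a DVR, so specializing to $K'=K$ (whence $R'=R$) makes the theorem apply verbatim. Your explicit verification that $\sO_Z(Z)=R$ --- via properness of $Z\to\Spec(R)$, flat base change, Nori-reducedness of $f$, and the integral closedness of the DVR $R$ --- is a worthwhile addition, since the corollary's statement omits that hypothesis of the theorem and the paper leaves its deduction implicit.
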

The proof of Theorem \ref{teoGENERAL} will be given in \S \ref{sec:proof}. It relies on the construction of some Tannakian lattice over $X$ that we will describe in \S \ref{sec2.2}. This approach is similar to the one taken in \cite{HdS19}, where Tannakian duality is considered for categories of coherent sheaves trivialized by proper surjective $H^0$-flat morphisms. For the comfort of the reader the notion of Tannakian lattice will be recalled in \S \ref{sec:2.1}

In section \S \ref{sez:PrimetoP} we will provide an interesting application of our previous result to the theory of fundamental group scheme. Again from the aforementioned Grothendieck's specialization theory, which will be widely recalled at the beginning of \S \ref{sez:PrimetoP}, it is known that, whenever $R$ is a complete discrete valuation ring with positive residue characteristic $p$, $\eta$ being the generic point of $\Spec(R)$, and with $X$ proper and smooth over $R$, then the morphism 
\begin{equation}\label{isoGRO}
    \varphi^{(p')}:\pi^{\text{\'et}}(X_{\overline{\eta}},\overline{x})^{(p')}\to \pi^{\text{\'et}}(X,\overline{x})^{(p')}
\end{equation}
    between the prime-to-$p$ fundamental groups is an isomorphism (see \ref{teoSPECIAL1} for more details and notations). A similar result for the fundamental group scheme, though \emph{expected}, has never been proved. In this article we will then prove that the morphism $$\psi^{(p')}: \pi(X_{\eta},x_{\eta})^{(p')}\to \pi(X,x)_{\eta}^{(p')}
$$
 is an isomorphism (cf. Theorem \ref{teoFUND} for a precise statement). In order to prove this claim one should be able to extend a pointed étale prime-to-$p$ torsor from $X_{\eta}$ to $X$ \emph{not} after extending scalars but over $R$ itself. However from isomorphism (\ref{isoGRO}) we can notably deduce that every prime-to-$p$ torsor over $X_{\eta}$ can be extended to a torsor over $X$ after extending scalars (for each torsor a new and different scalar extension thus giving rise to possibly infinitely many scalar extensions). To solve this inconvenient is where we will need our Theorem \ref{teoGENERAL}.

\section{The Tannakian lattice of trivialized vector bundles}\label{sec:2}
\subsection{A short introduction to Tannakian lattices}\label{sec:2.1}
To begin this section we present the essential definitions and the main results on Tannakian lattices as given by \cite{DH18}. First we discuss some notations. If $\varphi: R \to S$ is a ring homomorphism and $\mathcal{C}$ is a $R$-linear category, we define the category $\mathcal{C}_S$ obtained from $\mathcal{C}$ by extension of scalars as the category whose objects are the same as those of $\mathcal{C}$ and with morphisms given by $$ \text{Hom}_{\mathcal{C}_S}(X,Y)= \text{Hom}_{\mathcal{C}}(X,Y) \otimes_R S,$$
for any two objects $X,Y$ of $\mathcal{C}$. If $F: \mathcal{C} \rightarrow \mathcal{D}$ is an $R$-linear functor between two $R$-additive categories, then there is a natural extension of scalars $F_S: \mathcal{C}_S \rightarrow \mathcal{D}_S$ of the functor $F$. If both $\mathcal{C}, \mathcal{D}$ are tensor categories (over $R$), then also the categories $\mathcal{C}_S,\mathcal{D}_S$ are tensor categories (over $S$), and if $F$ is a tensor functor, then so is $F_S$.
In a tensor category $\mathcal{T}$ we denote by $\mathcal{T}^{\text{triv}}$, the full subcategory of trivial objects: it consists of objects $X$ of $\mathcal{T}$ admitting a monomorphism $X \rightarrow I^{\oplus n}$, where $n \in \mathbb{N}$ and $I$ is the unit object in $\mathcal{T}$. If $R$ is a Dedekind ring, we denote by $K$ its field of fractions.

\begin{remark}
In this paper we use the notion of trivialization meaning the following: a sheaf of modules $\sF$ over a scheme $X$ is said to be trivialized by a morphism $\phi: Y \to X$ if the inverse image $\phi^*\sF$ is isomorphic (as $\sO_Y$-modules) to $\sO_Y^{\oplus n}$ for some $n \in \bN$. This is not to be confused with the general notion of trivial objects in a category that we have presented above. The first meaning of trivialization, that of a trivialized sheaf of modules, was popularized in \cite{BdS11} and since it has been commonly used to refer to this phenomenon.
\end{remark}

\begin{definition}
A (neutral) Tannakian lattice over a Dedekind ring $R$ is an $R$-additive, rigid tensor category $\mathcal{T}$ that satisfies the following: 
\begin{enumerate}
\item  every morphism in $\mathcal{T}$ has kernel and image;
\item the category  $\mathcal{T}_K$ obtained by scalar extension is abelian;
\item there is an $R$-linear additive functor
\begin{align*}
\omega: \mathcal{T} \rightarrow \text{Mod}(R)
\end{align*}
such that 
\begin{itemize}
\item $\omega$ is faithful and preserves kernels and images;
\item the restriction of $\omega$ to $\mathcal{T}^{\text{triv}}$ is fully faithful.
\end{itemize}
\end{enumerate}
\end{definition}
A result on duality for tannakian lattices is presented in \cite{DH18}. We recall it hereafter for the comfort of the reader. In what follows, for a commutative ring with identity $A$, and $G= \Spec(B)$ an affine group scheme (finite or not) over $A$, the category $\Rep_A^0(G)$ (resp. $\Rep_A(G)$) consists of finite and projective (resp. finite) $A$-linear left $B$-comodules.

\begin{theorem}\label{2.3.2DH} \cite[Theorem 2.3.2]{DH18}
Let $(\mathcal{T}, \omega)$ be a Tannakian lattice over a Dedekind ring $R$. Then the group scheme $G = \textbf{Aut}_{\otimes}^R(\omega)$ is faithfully flat over $R$ and $\omega$ induces an equivalence between $\mathcal{T}$ and $\text{Rep}_R^0(G)$, the category of finite projective comodules over $G$.
\end{theorem}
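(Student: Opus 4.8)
The plan is to deduce the theorem from classical neutral Tannakian duality over the fraction field $K$, followed by a descent of the resulting Hopf algebra to an $R$-lattice. The first observation is that, since $\sT$ is rigid and $\om$ is a tensor functor, every object $X$ of $\sT$ is dualizable, so $\om(X)$ is a dualizable object of $\Mod(R)$, \ie a finite projective $R$-module; in particular every module in sight is torsion free. Base changing along $R\hookrightarrow K$, the hypothesis that $\sT_K$ is abelian makes $(\sT_K,\om_K)$ a neutral Tannakian category over the field $K$, where $\om_K=\om\ox_R K$. Classical duality (Saavedra, Deligne--Milne) then furnishes an affine $K$-group scheme $G_K=\Spec(A_K)$ with $A_K$ a Hopf algebra over $K$, together with an equivalence $\om_K\colon\sT_K\xrightarrow{\sim}\Rep_K(G_K)$. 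Recall also that, by the definition of scalar extension, $\sT_K$ has the same objects as $\sT$.

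Next I would construct the Hopf algebra over $R$. Following the coefficient (coendomorphism) coalgebra description, set
\[
A=\int^{X\in\sT}\om(X)\ox_R\om(X)^{\vee},
\]
the $R$-module spanned by the matrix coefficients of the objects of $\sT$; rigidity and the compatibility of $\om$ with duals endow $A$ with a coalgebra structure, while the tensor structure of $\sT$ and multiplicativity of $\om$ promote it to a commutative Hopf algebra, so that $G=\Spec(A)=\mathbf{Aut}_{\ox}^{R}(\om)$. Since each generator $\om(X)\ox_R\om(X)^{\vee}$ is finite projective, $A$ is a filtered colimit of torsion free modules, hence torsion free, hence \emph{flat} over the Dedekind ring $R$. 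The comparison map $A\ox_R K\to A_K$ is an isomorphism because the coend commutes with the exact base change $-\ox_R K$, so $G$ has generic fibre $G_K$. Faithful flatness is the remaining point of the first assertion: flatness is in hand, and the counit $\eps\colon A\to R$ shows $A\ox_R k(\mathfrak p)\to k(\mathfrak p)$ is surjective, hence nonzero, for every prime $\mathfrak p$, so $\Spec(A)\to\Spec(R)$ is surjective. The Dedekind hypothesis enters essentially here.

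For the equivalence, the coaction coming from $A$ upgrades $\om$ to an $R$-linear tensor functor $\widetilde{\om}\colon\sT\to\Rep_R^0(G)$, landing in finite projective comodules by the first paragraph and faithful because $\om$ is. For fullness I would argue by descent and reflection of integrality. Given a $G$-equivariant $R$-linear map $\psi\colon\om X\to\om Y$, its base change $\psi_K$ is $G_K$-equivariant, so by the field case $\psi_K=\om_K(\phi_K)$ for a unique $\phi_K\in\Hom_{\sT_K}(X,Y)=\Hom_\sT(X,Y)\ox_R K$. Clearing denominators, $\phi_0=s\,\phi_K$ lies in $\Hom_\sT(X,Y)$, and $\om(\phi_0)=s\psi$ has image inside $s\,\om(Y)=\om(sY)$, where $sY$ is the image of the monomorphism $s\cdot\id_Y$ (a monomorphism since $\om Y$ is torsion free and $\om$ preserves kernels and is faithful). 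Because $\om$ preserves images and is faithful, the image of $\phi_0$ is contained in the subobject $sY\cong Y$, whence $\phi_0=(s\cdot\id_Y)\circ\phi$ with $\om(\phi)=\psi$. The hypothesis that $\om|_{\sT^{\triv}}$ be fully faithful is what pins down the trivial part: it forces $\End_\sT(I)=R$ and makes trivial objects correspond exactly to trivial comodules, which both anchors the identification of $G_K$ and feeds the surjectivity step below.

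The main obstacle, as always in the integral setting, is essential surjectivity of $\widetilde{\om}$. By construction $A$ is the union of the coefficient spaces $\om(X)\ox_R\om(X)^{\vee}$, so any finite comodule $V$, whose coefficients involve only finitely many objects, is dominated by a finite direct sum $\bigoplus_i\om(X_i)=\om\bigl(\bigoplus_i X_i\bigr)$; thus $V$ arises as a subquotient of an object in the image. Since $\widetilde{\om}$ is exact and its image is closed under finite sums, duals, tensor products and the kernels and images that $\sT$ possesses, subobjects are immediate; the passage to quotient comodules is where the lack of cokernels in $\sT$ bites. To handle it I would descend to $K$, where $\sT_K$ is abelian and $V_K\cong\om_K(X_0)$ for an object $X_0$ of $\sT$, and then correct the $R$-lattice: one identifies $V$ with a $G$-stable lattice in $\om(X_0)_K$ and realizes it out of $X_0$ using the available kernels and images, together with the trivial objects supplied by the $\sT^{\triv}$ hypothesis. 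Controlling these lattice corrections over the Dedekind ring, with only kernels and images at one's disposal, is the delicate technical core of the proof; granting it, $\widetilde{\om}$ is essentially surjective and, with the full faithfulness established above, yields the equivalence $\sT\simeq\Rep_R^0(G)$.
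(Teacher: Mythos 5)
You should first note that the paper itself contains no proof of this statement: Theorem \ref{2.3.2DH} is recalled verbatim from \cite{DH18} precisely so that the authors need not reprove it, so the only meaningful comparison is against the actual proof of Duong and Ho Hai. Your overall plan --- build the coefficient (coend) coalgebra $A$ over $R$, compare with classical neutral Tannakian duality for $\sT_K$ over the fraction field, and descend --- is the right general shape, and your preliminary observations (dualizability forces $\om(X)$ finite projective; $\sT_K$ with $\om_K$ is neutral Tannakian over $K$) are sound. But the write-up assumes away exactly the points that make the integral statement a theorem rather than a formality.

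Concretely, there are three gaps. (i) \emph{Flatness of $A$.} The coend is a filtered colimit of \emph{quotients} of modules of the form $\bigoplus_X \om(X)\ox_R\om(X)^{\vee}$ (the coequalizer identifying matrix coefficients along morphisms), not a filtered colimit of the projective generators themselves; a quotient of torsion-free $R$-modules can perfectly well have torsion, so ``filtered colimit of torsion free, hence torsion free, hence flat'' breaks at the stage level. Torsion-freeness of $A$ is where the lattice axioms --- $\om$ preserving kernels and images, and full faithfulness of $\om$ on $\sT^{\triv}$ --- are actually consumed, and (faithful) flatness of $G$ is the substantive half of the theorem, not a remark. (ii) \emph{Fullness.} From $\im(\om(\phi_0))\subseteq s\,\om(Y)$ you cannot conclude that $\im(\phi_0)$ factors through the subobject $sY$ in $\sT$: the category $\sT$ has only kernels and images, no cokernels, and is not abelian, so containment of subobjects is not detected merely by containment of their $\om$-images (in an abelian category one would test against the cokernel of $sY\hookrightarrow Y$, which is unavailable here). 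What you actually need is that $\Hom_{\sT}(X,Y)$ is saturated inside $\Hom_{\sT_K}(X,Y)$ with respect to integrality of $\om$-values, and that requires an argument. (iii) \emph{Essential surjectivity.} You explicitly defer it (``granting it''); but the realization of an arbitrary finite projective $A$-comodule as a $G$-stable lattice correction of some $\om(X_0)$, using only kernels, images and trivial objects, is the technical core of the proof in \cite{DH18}. As it stands your text is a plausible plan with the three hardest steps left open, not a proof.
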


\subsection{A Tannakian lattice over $X$}\label{sec2.2}

Throughout this section $X$ will be a proper and faithfully flat scheme over $R$, a complete DVR (as in figure \ref{figura1}) with integral and normal fibers, endowed with a section $x\in X(R)$; in particular $\mathcal{O}_X(X)=R$.
In order to tackle the problem of finding an extension of a given Galois torsor, first we can use Tannakian duality to obtain a group scheme over $R$ that eventually will lead to the construction of the desired torsor.
As usual we will denote by $\text{Coh}(X)$ and $\text{Vect}(X)$ the categories of coherent sheaves and vector bundles over $X$ respectively. The goal of this section is to prove the following result.

\begin{theorem}\label{myreticulo} Let $X$, $R$ and $x$ be as at the beginning of this section. Let furthermore $f:Y\to X_K$ be a finite $G$-torsor and $K'/K$ a finite extension. Consider the pullback $f_{K'}:Y'\to X_{K'}$ of $f$ and assume the existence of a morphism $\varphi:Z\to X$ such that $\mathcal{O}_Z(Z)=R'$, where $R'$ is the integral closure of $R$ in $K'$ and, as in figure \ref{figura1}, the restriction of $\varphi$ to the generic fibre coincides with $\lambda\circ f_{K'}$. Let $\mathcal{T}$ be the full subcategory of $\text{Coh}(X)$ whose objects are all the vector bundles $V \in \text{Vect}(X)$ trivialized by $\varphi$. If $\varphi$ is finite, faithfully flat and the locally free sheaf $\varphi_*(\mathcal{O}_Z)$ is trivialized by $\varphi$, then $\mathcal{T}$ endowed with the fibre functor $\omega := x^{*}$  and unit object $\mathcal{O}_X$ is a Tannakian lattice over $R$.
\end{theorem}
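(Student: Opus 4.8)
The plan is to verify, one at a time, the three defining conditions of a Tannakian lattice for the pair $(\mathcal{T},\omega)$ with $\omega=x^*$ and unit $\mathcal{O}_X$. The single mechanism driving every verification is the following: since every object of $\mathcal{T}$ is trivialized by $\varphi$ and $\mathcal{O}_Z(Z)=R'$, for $V,W\in\mathcal{T}$ the pullback identifies $\Hom_{\mathcal{O}_Z}(\varphi^*V,\varphi^*W)$ with the constant matrices $M_{r\times s}(R')$; thus $\varphi^*g$ is a \emph{constant} matrix over $R'$, which — because $R$ is complete, so $R'$ is a DVR, hence a PID — is an object over which linear algebra is transparent. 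One then descends the conclusions back down in two ways: from $Z$ to $X$ along the faithfully flat $\varphi$, and from $R'$ to $R$ along $\rho:\Spec(R')\to\Spec(R)$, which is faithfully flat since $R'$ is finite and torsion-free over the DVR $R$.

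First I would record the tensor structure. The category $\mathcal{T}$ is closed under $\oplus$, $\otimes_{\mathcal{O}_X}$ and duals, because $\varphi^*$ commutes with these operations and a finite free $\mathcal{O}_Z$-module stays finite free under them; the unit $\mathcal{O}_X$ lies in $\mathcal{T}$ since $\varphi^*\mathcal{O}_X=\mathcal{O}_Z$, and rigidity is inherited from $\Vect(X)$. As $\mathcal{O}_X(X)=R$, all $\Hom$-sets are $R$-modules and $\mathcal{T}$ is $R$-additive. For the existence of kernels and images I take $g:V\to W$ in $\mathcal{T}$; then $\varphi^*g=A\in M_{r\times s}(R')$ is constant, so $\ker A$ and $\im A$ are free $R'$-modules (submodules of free modules over a PID), whence $\varphi^*(\ker g)=\ker(\varphi^*g)$ and $\varphi^*(\im g)=\im(\varphi^*g)$ (using flatness of $\varphi$) are free $\mathcal{O}_Z$-modules. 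Since local freeness descends along the faithfully flat $\varphi$, both $\ker g$ and $\im g$ are vector bundles trivialized by $\varphi$, i.e.\ objects of $\mathcal{T}$; being computed in the ambient abelian category $\mathrm{Coh}(X)$, they are the kernel and image in $\mathcal{T}$ as well.

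The delicate step is the fibre functor, and the obstacle is precisely that $\omega=x^*$, being pullback along the (non-flat) section $x$, is only right exact, so preservation of kernels and images is not automatic. To neutralize this I would first lift the section: using the rational point $y$ of $Y$ (hence $y'\in Y'(K')=Z_K(K')$ over $x_{K'}$) and the valuative criterion for the finite morphism $\varphi$, I obtain $\tilde{x}:\Spec(R')\to Z$ with $\varphi\circ\tilde{x}=x\circ\rho$ and $\tilde{x}|_{\Spec K'}=y'$; since $y'$ is a section of the structural map $Z_K\to\Spec(K')$ and $\mathcal{O}_{Z_K}(Z_K)=K'$, the induced map $\tilde{x}^{\#}:R'\to R'$ is the identity. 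Consequently $\tilde{x}^*\varphi^*g$ equals the constant matrix $A$ on the one hand and $\rho^*x^*g$ on the other. Faithfulness follows at once: if $x^*g=0$ then $A=0$, hence $\varphi^*g=0$, hence $g=0$ by faithful flatness of $\varphi$. Preservation of kernels and images follows by the same identity: applying the faithfully flat (hence exactness-reflecting) functor $\rho^*$ turns $x^*(\ker g)\to x^*V$ and $x^*(\im g)\to x^*W$ into $\ker A\hookrightarrow (R')^r$ and $\im A\hookrightarrow (R')^s$, the honest kernel and image of $A=\rho^*x^*g$; reflecting back along $\rho^*$ yields $x^*(\ker g)=\ker(x^*g)$ and $x^*(\im g)=\im(x^*g)$. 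In particular this shows that the a priori only right-exact $x^*$ preserves the monomorphism $\im g\hookrightarrow W$.

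It remains to treat the two conditions living over $K$. For abelianness of $\mathcal{T}_K$ I would run the matrix argument over the generic fibre, where now $\mathcal{O}_{Z_K}(Z_K)=R'\otimes_R K=K'$ is a \emph{field}: for a morphism of $\varphi_K$-trivialized bundles the pullback is a constant matrix over $K'$, whose kernel, image \emph{and cokernel} are all $K'$-vector spaces, so the full subcategory of $\Vect(X_K)$ of $\varphi_K$-trivialized bundles is abelian. The generic-fibre functor $V\mapsto V_K$ embeds $\mathcal{T}_K$ fully faithfully into this abelian category (flat base change gives $\Hom_X(V,W)\otimes_R K=\Hom_{X_K}(V_K,W_K)$); its essential image is closed under kernels, because $\Hom$-modules over $X$ are $R$-torsion-free so that kernels in $\mathcal{T}_K$ are already computed in $\mathcal{T}$, and closed under cokernels, because $\mathcal{T}_K$ is rigid and the cokernel of a morphism is the dual of the kernel of its dual. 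A full subcategory of an abelian category closed under kernels and cokernels is abelian, so $\mathcal{T}_K$ is abelian. Finally, for full faithfulness of $\omega$ on $\mathcal{T}^{\triv}$ I rewrite $\Hom_{\mathcal{T}}(V,W)=\Gamma\bigl(X,\underline{\Hom}(V,W)\bigr)$ and $\Hom_R(x^*V,x^*W)=x^*\underline{\Hom}(V,W)$, reducing the claim to showing that for a trivial object $\mathcal{H}\hookrightarrow\mathcal{O}_X^{N}$ the evaluation $\Gamma(X,\mathcal{H})\to x^*\mathcal{H}$ is bijective; this I would prove by the same descent, comparing both sides with the constant submodule $\im\bigl(\varphi^*(\mathcal{H}\hookrightarrow\mathcal{O}_X^N)\bigr)\subseteq (R')^N$ and using that global sections over the proper $X$ with $\mathcal{O}_X(X)=R$ are constant. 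The main obstacle throughout is the interplay between the non-exact section pullback $x^*$ and the torsion that $\operatorname{coker}(A)$ may carry on the special fibre; the lift $\tilde{x}$ together with faithfully flat descent along $\rho$ is exactly what removes it.
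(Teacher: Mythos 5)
Most of your proposal is sound, and two of its three parts track the paper closely. Your treatment of kernels and images (morphisms of $\varphi$-trivialized bundles pull back to constant matrices over $R'=\mathcal{O}_Z(Z)$, whose kernels and images are free over the PID $R'$, then descend faithfully flatly along $\varphi$) and of the fibre functor (lift the section to $\tilde{x}:\Spec(R')\to Z$ by the valuative criterion, then check exactness and faithfulness after the faithfully flat base change $\rho:\Spec(R')\to\Spec(R)$) is exactly the mechanism of Lemma \ref{plenfiel}, Proposition \ref{nucleoseimagenes} and the first half of Proposition \ref{contruccionomega}; the paper merely packages the matrix observation as full faithfulness of $\delta^*$ on free modules, $\delta:Z\to\Spec(R')$ being the Stein factorization. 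Your argument for abelianness of $\mathcal{T}_K$, by contrast, is genuinely different and simpler than the paper's: the paper identifies the $\varphi_K$-trivialized bundles with the $f$-trivialized ones (Lemma \ref{T2igualT3}, resting on Propositions \ref{leizhang} and \ref{tonini} about essentially finite bundles) and then proves $j^*$ is an \emph{equivalence} via Lemmas \ref{propB} and \ref{propC}; you prove abelianness of the $\varphi_K$-trivialized category by linear algebra over the field $K'=\mathcal{O}_{Y'}(Y')$ and only need the essential image of $\mathcal{T}_K$ to be closed under kernels (clear denominators) and cokernels (the duality trick), which indeed suffices for the statement at hand, though the paper's stronger equivalence is what gets used later in \S\ref{sec:proof}.

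The genuine gap is in your last step. You reduce full faithfulness of $x^*$ on $\mathcal{T}^{\mathrm{triv}}$ to bijectivity of $\Gamma(X,\mathcal{H})\to x^*\mathcal{H}$ ``for a trivial object $\mathcal{H}\hookrightarrow\mathcal{O}_X^N$'', taking $\mathcal{H}=V^{\vee}\otimes W$. But for $V,W\in\mathcal{T}^{\mathrm{triv}}$ it is not known that $V^{\vee}\otimes W$ is trivial: from $W\hookrightarrow\mathcal{O}_X^{\oplus m}$ you only get $V^{\vee}\otimes W\hookrightarrow (V^{\vee})^{\oplus m}$, and a monomorphism $V^{\vee}\hookrightarrow\mathcal{O}_X^{\oplus s}$ amounts to a family of global sections of $V$ that generate $V_K$ generically, i.e.\ to the non-obvious fact that $V_K$ is free. (A trivial object is a \emph{sub}object of a free sheaf; its dual is a priori neither.) This is precisely where the paper invests its heaviest machinery, proving in Proposition \ref{contruccionomega} that every object of $\mathcal{T}^{\mathrm{triv}}$ is in fact free, via essential finiteness of $V_s$, semicontinuity, Proposition \ref{leizhang} and Grauert's theorem; so the assumption cannot be absorbed into the word ``trivial''. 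I stress that your evaluation claim itself is correct and provable by the descent you sketch (one gets $\Gamma(X,\mathcal{H})=R^N\cap\im(A)$ and $x^*\mathcal{H}\otimes_R R'=\im(A)$ inside $(R')^N$, then a sandwich plus faithful flatness of $R\to R'$), and the gap is even repairable inside your own framework: run the same descent argument on the generic fibre to show that a subobject of a free object in $\mathcal{T}_2$ is free, lift a generic basis of $V_K$ to $\Gamma(X,V)$ after clearing denominators, and use $R$-torsion-freeness of $V^{\vee}$ to obtain $V^{\vee}\hookrightarrow\mathcal{O}_X^{\oplus v}$. But as written, the reduction silently assumes the key point that needs proof, so this step is a genuine missing idea rather than a routine detail.
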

For simplicity we will use lower case letters to denote the rank of a vector bundle over a scheme, for example if $V \in \text{Vect}(X)$, we will write $v:=\text{rank}(V)$. The proof of Theorem \ref{myreticulo} involves the verification of a list of axioms, and that is why we will divide it in several propositions. Note that the condition $\varphi^{*}\varphi_{*}\mathcal{O}_Z$ is trivial implies that $\varphi_{*}\mathcal{O}_Z$ is an object of $\mathcal{T}$. The fact that  $\mathcal{O}_X(X)=R$ ensures that $\mathcal{T}$ is $R$-additive; it is also a rigid tensor category, in the common way, by taking the usual definitions of tensor product and dual of vector bundles.

In order to study kernels and images in our category we introduce a very simple technical lemma.

\begin{lemma}\label{plenfiel} If $\delta: T \rightarrow \text{Spec}(R)$ is a faithfully flat morphism that has a section $t: \text{Spec}(R) \rightarrow T$, and such that $\mathcal{O}_T(T)=R$, with $R$ being a PID, then the functor $\delta^{*}$ is fully faithful  over the free $\mathcal{O}_R$-modules.
\end{lemma}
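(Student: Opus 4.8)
The plan is to reduce the statement to a single concrete fact about the ring homomorphism $\delta^{\#}\colon R \to \mathcal{O}_T(T)$ induced by $\delta$, and then to check that this map is bijective. First I would unwind what ``fully faithful over the free modules'' asks for: given two finite free $R$-modules, say $R^{\oplus a}$ and $R^{\oplus b}$ (viewed as sheaves on $\Spec(R)$), I must show that
\[
\delta^{*}\colon \Hom_{R}(R^{\oplus a}, R^{\oplus b}) \longrightarrow \Hom_{\mathcal{O}_T}(\delta^{*}R^{\oplus a}, \delta^{*}R^{\oplus b})
\]
is a bijection. Since $\delta^{*}\mathcal{O}_{\Spec(R)} = \mathcal{O}_T$, the pullbacks are the free sheaves $\mathcal{O}_T^{\oplus a}$ and $\mathcal{O}_T^{\oplus b}$, so both Hom-groups are matrix modules: the source is $M_{b,a}(R)$ and, because a morphism of free sheaves is recorded by its effect on the standard generators together with $\Hom_{\mathcal{O}_T}(\mathcal{O}_T,\mathcal{G})=\mathcal{G}(T)$, the target is $M_{b,a}(\mathcal{O}_T(T))$. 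Under these identifications $\delta^{*}$ is nothing but the map applying $\delta^{\#}$ to each matrix entry. Hence the whole lemma collapses to the assertion that $\delta^{\#}\colon R \to \mathcal{O}_T(T)$ is an isomorphism of rings.

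Next I would establish that $\delta^{\#}$ is bijective using the remaining hypotheses. Injectivity is immediate from the section: since $\delta \circ t = \id_{\Spec(R)}$ we get $t^{\#}\circ \delta^{\#} = \id_R$, so $\delta^{\#}$ is a split monomorphism (faithful flatness of $\delta$ yields the same conclusion independently). For surjectivity I would exploit the hypothesis $\mathcal{O}_T(T) = R$: the splitting exhibits $\mathcal{O}_T(T) \cong R \oplus C$ as $R$-modules, where $C = \ker(t^{\#})$. Combining with $\mathcal{O}_T(T) \cong R$ gives $R \cong R \oplus C$, and since $R$ is a PID (hence a Noetherian domain) a rank count, together with torsion-freeness of $C \subseteq \mathcal{O}_T(T) \cong R$, forces $C = 0$. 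Therefore $\delta^{\#}$ is onto, hence an isomorphism, which is exactly what the reduction above requires.

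I do not expect any genuine obstacle here: the content is essentially bookkeeping, and the only place where one must be slightly careful is the passage from the abstract isomorphism $\mathcal{O}_T(T) \cong R$ to the statement that the structure map $\delta^{\#}$ itself is an isomorphism — this is precisely where the section and the PID hypothesis do their work. If instead one reads the hypothesis $\mathcal{O}_T(T) = R$ as saying that $\delta^{\#}$ is already the canonical identification, then even this last step is automatic and the argument is purely the matrix computation of the first paragraph.
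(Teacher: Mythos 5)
Your proof is correct under the standard geometric reading of the hypothesis $\mathcal{O}_T(T)=R$ (namely, that the structure map $\delta^{\#}\colon R\to\mathcal{O}_T(T)$ is the identification), and it takes a genuinely different route from the paper. You unwind $\delta^{*}$ explicitly: both Hom-groups are matrix modules, $\delta^{*}$ is entrywise application of $\delta^{\#}$, so the lemma is exactly the assertion that $\delta^{\#}$ is bijective; injectivity comes from the splitting $t^{\#}\circ\delta^{\#}=\id_R$ and surjectivity from the hypothesis (directly, or via your rank count). The paper never makes this identification; instead it first proves that $t^{*}$ is an isomorphism on free modules --- surjectivity from $t^{*}\delta^{*}=\id$, injectivity by viewing $t^{*}$ as a surjective endomorphism of the finitely generated module $R^{\oplus vw}$ and invoking Vasconcelos \cite{Vas68} --- and then deduces fullness of $\delta^{*}$ formally from $t^{*}(\delta^{*}t^{*}h)=t^{*}h$ together with faithfulness of $t^{*}$, repeating the rank argument to get bijectivity. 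What the paper's detour buys is that the auxiliary facts about $t^{*}$ (full faithfulness and exactness on free modules, cf.\ Remark \ref{consLem2.4}) are precisely what gets reused in Propositions \ref{nucleoseimagenes} and \ref{contruccionomega}; your argument yields the same byproducts even more directly, since under your identification $t^{*}$ is entrywise $t^{\#}=(\delta^{\#})^{-1}$. What your route buys is transparency and economy: neither Vasconcelos' theorem nor (essentially) the PID hypothesis is needed, and it becomes clear that all the content sits in the hypothesis on global sections. One caution on your middle paragraph: the step ``$\mathcal{O}_T(T)\cong R\oplus C$ combined with $\mathcal{O}_T(T)\cong R$ gives $R\cong R\oplus C$'' silently requires the isomorphism $\mathcal{O}_T(T)\cong R$ to be $R$-linear, i.e.\ compatible with $\delta^{\#}$; if one insisted on reading the hypothesis as a mere abstract ring isomorphism, that comparison of $R$-module structures would not be justified (one would instead argue that $\ker(t^{\#})$ is a prime ideal of the PID $\mathcal{O}_T(T)$ and use that field homomorphisms are injective). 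Since the paper itself, and its application to the Stein factorization $\delta\colon Z\to\Spec(R')$, clearly intend the structure-map reading, and since you flag exactly this interpretive point in your closing paragraph, this is a matter of emphasis rather than a gap.
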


\begin{proof}
Being $t$ a section of $\delta$, we have that $\delta \circ t = \text{id}_R$. In order to prove that $\delta^{*}$ is full, when restricted to free $\mathcal{O}_R$-modules, we first need to demonstrate that  $t^{*}$  is fully faithful over the free $\mathcal{O}_T$-modules. Let $m \in \text{Hom}(\mathcal{O}_R^{\oplus v}, \mathcal{O}_R^{\oplus w})$, note that $t^{*}\delta^{*}m= (\text{id}_R)^{*}m=m$, and therefore
\begin{equation}\label{eq5.1}
t^{*}: \text{Hom}_{\mathcal{O}_T}( \mathcal{O}_T^{\oplus v}, \mathcal{O}_T^{\oplus w}) \rightarrow \text{Hom}_{\mathcal{O}_R}(\mathcal{O}_R^{\oplus v}, \mathcal{O}_R^{\oplus w})
\end{equation}
is surjective. The first module is a free $\mathcal{O}_T(T)$-module of rank $vw$ and the second one is a free $R$-module also of rank $vw$, given that $\mathcal{O}_T(T)=R$. Then $t^{*}$  is actually an $R$-endomorphism of $R^{\oplus vw}$, now thanks to  \cite[Proposition 1.2, p. 506]{Vas68} we have that $t^{*}$ is an isomorphism. Now let $h: \mathcal{O}_T^{\oplus r} \rightarrow \mathcal{O}_T^{\oplus s}$ be a morphism of free  $\mathcal{O}_T$-modules. As $t$ is a section of $\delta$, then $t^{*}\delta^{*}t^{*}h= (\text{id}_R)^{*}t^{*}h = t^{*}h$, and given that  $t^{*}$ is faithful, this implies $
\delta^{*}t^{*}h= h$, and thus we have proven that $\delta^{*}$ is full. Exactly like before this is sufficient to deduce that \begin{align*}
\delta^{*}: \text{Hom}_{\mathcal{O}_R}(\mathcal{O}_R^{\oplus v}, \mathcal{O}_R^{\oplus w}) \rightarrow \text{Hom}_{\mathcal{O}_T}(\mathcal{O}_T^{\oplus v}, \mathcal{O}_T^{\oplus w})
\end{align*}
is an isomorphism, which concludes the proof.
\end{proof}

\begin{remark}\label{consLem2.4}
Notice that as a consequence of the proof of Lemma \ref{plenfiel}, one can prove that $t^*$, when restricted to free modules, is also exact. Indeed  $t^*(u)=t^*\delta^*(\lambda)=\text{id}_R^*(\lambda)$ for some $\lambda$, morphism of free $R$-modules, then $t^*(u)$ is injective if and only if $\lambda$ is injective, if and only if $u$ is injective, as $\delta$ is faithfully flat.
\end{remark}

\begin{proposition}\label{nucleoseimagenes}
The category $\mathcal{T}$ of Theorem \ref{myreticulo} has kernels and images.
\end{proposition}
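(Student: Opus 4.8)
The plan is to compute the kernel and the image of a morphism $u\colon V\to W$ of $\mathcal{T}$ first in the ambient abelian category $\text{Coh}(X)$, and then to show that both the kernel sheaf $K:=\ker u$ and the image sheaf $I:=\im u\subseteq W$ are in fact vector bundles trivialized by $\varphi$, hence objects of $\mathcal{T}$; the universal properties will then be inherited from $\text{Coh}(X)$ for free, since $\mathcal{T}$ is a full subcategory. The whole point is therefore to prove that $K$ and $I$ belong to $\mathcal{T}$.

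First I would pull everything back along $\varphi$. Since $\varphi$ is flat, $\varphi^{*}$ is exact, so $\varphi^{*}K=\ker(\varphi^{*}u)$ and $\varphi^{*}I=\im(\varphi^{*}u)$. Now $\varphi^{*}V\cong\mathcal{O}_Z^{\oplus v}$ and $\varphi^{*}W\cong\mathcal{O}_Z^{\oplus w}$ because $V,W$ are trivialized by $\varphi$, and under these trivializations $\varphi^{*}u$ is given by a matrix $A$ with entries in $\text{Hom}_{\mathcal{O}_Z}(\mathcal{O}_Z,\mathcal{O}_Z)=\mathcal{O}_Z(Z)=R'$. Thus the description of $\ker(\varphi^{*}u)$ and $\im(\varphi^{*}u)$ is reduced to linear algebra over $R'$.

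The key observation is that $R'$, being the integral closure of the DVR $R$ in $K'$, is again a DVR, hence a PID, so $A$ admits a Smith normal form $A=PDQ$ with $P,Q$ invertible over $R'$ and $D$ diagonal. Multiplying by the automorphisms of $\mathcal{O}_Z^{\oplus v}$ and $\mathcal{O}_Z^{\oplus w}$ induced by $P,Q$, I may assume that $\varphi^{*}u$ is multiplication by $D=\text{diag}(d_1,\dots,d_r,0,\dots,0)$ with the $d_i\neq 0$ in $R'$. To read off kernel and image over $\mathcal{O}_Z$ I need each $d_i$ to be a non-zero-divisor on $\mathcal{O}_Z$; since $d_i$ is a power of a uniformizer $\pi'$ of $R'$ up to a unit, it suffices that $\mathcal{O}_Z$ be $\pi'$-torsion free. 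This is where I expect the only real subtlety: $Z$ is flat over $R$ (being the composite of the flat morphisms $\varphi$ and $X\to\Spec R$), hence $\mathcal{O}_Z$ is $\pi$-torsion free, and writing $\pi=(\pi')^{e}\cdot(\text{unit})$ one checks that $\pi' s=0$ forces $\pi s=0$, whence $s=0$. With the $d_i$ non-zero-divisors one gets $\ker(\varphi^{*}u)\cong\mathcal{O}_Z^{\oplus(v-r)}$ and $\im(\varphi^{*}u)\cong\mathcal{O}_Z^{\oplus r}$, both free; equivalently, $K$ and $I$ are trivialized by $\varphi$.

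It then remains to descend local freeness from $Z$ to $X$. Since $\varphi$ is faithfully flat and $\varphi^{*}K,\varphi^{*}I$ are locally free (hence flat) $\mathcal{O}_Z$-modules, faithfully flat descent of flatness shows that $K$ and $I$ are flat over $\mathcal{O}_X$, and being coherent over the Noetherian scheme $X$ they are locally free. Therefore $K,I\in\text{Vect}(X)$ are trivialized by $\varphi$, i.e. they are objects of $\mathcal{T}$. Finally, because $\mathcal{T}$ is full in $\text{Coh}(X)$ and $K=\ker u$, $I=\im u$ already satisfy the kernel and image universal properties there, these properties persist when tested only against objects of $\mathcal{T}$, producing the kernel and the image of $u$ inside $\mathcal{T}$. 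The main obstacle, as indicated, is securing the freeness of $\varphi^{*}K$ and $\varphi^{*}I$ — which forces the torsion-freeness argument relating the uniformizers of $R$ and $R'$ — together with the descent of local freeness along $\varphi$.
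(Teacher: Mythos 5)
Your proof is correct, and its overall strategy is the same as the paper's: trivialize along $\varphi$, reduce the computation of kernel and image to linear algebra over $R'$, and then descend local freeness through the faithfully flat $\varphi$. The implementation of the reduction step, however, is genuinely different. The paper identifies morphisms of free $\mathcal{O}_Z$-modules with morphisms of free $R'$-modules via Lemma \ref{plenfiel}, applied to the Stein factorization $\delta\colon Z \to \Spec(R')$ of $p\circ\varphi$; this requires producing a section $t$ of $\delta$ by the valuative criterion of properness (one of the places where completeness of $R$ enters) and invokes a theorem of Vasconcelos. You bypass that apparatus with the tautological observation that $\Hom_{\mathcal{O}_Z}(\mathcal{O}_Z^{\oplus v},\mathcal{O}_Z^{\oplus w})$ is the set of $w\times v$ matrices over $\Gamma(Z,\mathcal{O}_Z)=R'$, and then compute kernel and image explicitly via Smith normal form, instead of quoting that kernels and images of maps of free modules over a PID are free. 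Your route also makes explicit a point the paper leaves implicit: the paper's argument needs $\delta$ to be flat (it asserts that $\delta$ is faithfully flat and that $\delta^{*}$ is exact), and the justification is precisely your $\pi'$-torsion-freeness argument; spelling this out is an improvement. On the other hand, the paper's detour is not wasted globally: the section $t$ and Lemma \ref{plenfiel} are reused later (Remark \ref{consLem2.4} and the proof of Proposition \ref{contruccionomega}), so avoiding them here does not remove them from the paper.

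One small inaccuracy to repair: the integral closure of a DVR in a finite extension of its fraction field is in general only a semi-local Dedekind domain, not a DVR. Semi-local Dedekind still implies PID, so your Smith normal form survives as stated; but your claim that each nonzero $d_i$ is a unit times a power of a single uniformizer $\pi'$ does require $R'$ to be local, and for that you should invoke the completeness of $R$ (assumed throughout \S\ref{sec2.2}, and cited to Serre in the paper) rather than the bare fact that $R$ is a DVR.
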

\begin{proof}
Let $V,W \in \text{Obj}(\mathcal{T})$ and $\phi \in \text{Hom}_{\mathcal{T}}(V,W)$. We want to prove that both the kernel and image of $\phi$ are objects of $\mathcal{T}$. The morphism $\phi:V \rightarrow W$ induces a map $\varphi^{*}\phi: \varphi^{*}V \rightarrow \varphi^{*}W$, and given that $V,W$ are objects of $\mathcal{T}$, this, up to isomorphisms, can be seen as a morphism  $\varphi^{*}\phi: \mathcal{O}_Z^{\oplus v} \rightarrow \mathcal{O}_Z^{\oplus w}$ between free sheaves. Since the given $G$-torsor $f:Y \rightarrow X_K$ is Nori-reduced then we have that $\mathcal{O}_Y(Y)=K$, this implies that $\mathcal{O}_{Y^{\prime}}(Y^{\prime})=K'$ as cohomology commutes with flat base change. Now, as $\widetilde{j}$ is flat, we have an injection $\mathcal{O}_Z(Z) \rightarrow \mathcal{O}_{Y^{\prime}}(Y^{\prime})=K'$.

Now let $R'$ be the integral closure of $R$ in $K'$. Notice that the morphism $\varphi':= p \circ \varphi: Z \to \text{Spec}(R)$ is proper, as it is the  composition of proper morphisms. Then we consider the Stein factorization of $\varphi'$, given by $$\xymatrix{Z \ar[r]^-{\delta} & \Spec(R') \ar[r]^-{\mu} & \text{Spec}(R),}$$ for a proper morphism $\delta$ with geometrically connected fibres and a finite morphism $\mu$, which generically coincides with $\Spec(K')\to \Spec(K)$ \cite[Theorem 37.48.5]{Stack15}.

Hence we have a proper and faithfully flat map $\delta: Z \to \text{Spec}(R')$. In order to apply Lemma \ref{plenfiel}, we need a section to this map. Note that being $R$ complete, $R^{\prime}$ is also a complete discrete valuation ring, according to \cite[II, 2 Proposition 3]{Ser68}. Then from the valuative criterion of properness we deduce the existence of a section $t: \text{Spec}(R^{\prime}) \rightarrow Z$ extending $y_{K'}: \text{Spec}(K^{\prime}) \rightarrow Y$. 

The discussion above and Lemma \ref{plenfiel} imply that the morphism $\varphi^{*}\phi$ comes from a morphism of free $\mathcal{O}_{R^{\prime}}$-modules, that is, there exists $g: \mathcal{O}_{R^{\prime}}^{\oplus v} \rightarrow \mathcal{O}_{R^{\prime}}^{\oplus w}$ such that $\varphi^{*}\phi = \delta^{*}g$. Given that both the kernel and the image of $g$ are free $R^{\prime}$-modules, and using that $\delta^{*}$ is exact, we conclude that  $\text{Im}(\delta^{*}g) = \delta^{*}\text{Im}(g)= \text{Im}(\varphi^{*}\phi)$ and $\text{Ker}(\delta^{*}g)  =\delta^{*}\text{Ker}(g)= \text{Ker}(\varphi^{*}\phi)$, and that they are free $\mathcal{O}_Z$-modules. As $\varphi^*$ is an exact functor we have the isomorphisms
\begin{align*}
\text{Im}(\varphi^{*}\phi) \simeq \varphi^{*}\text{Im}(\phi) \text{ and } \text{Ker}(\varphi^{*}\phi) \simeq \varphi^{*}\text{Ker}(\phi),
\end{align*}
finally using (faithfully) flat descent we obtain that $\text{Im}(\phi)$ and $\text{Ker}(\phi)$ are locally free $\mathcal{O}_X$-modules. That they are trivialized by $\varphi$ follows directly from the exactness of $\varphi^{*}$.
\end{proof}
A consequence of the proof of the last two results is that, in our case, a map between free $\mathcal{O}_Z$-modules has kernel and image that are also free $\mathcal{O}_Z$-modules. Also the fact that $\text{Spec}(R')$ is the Stein factorization of $\varphi'=p\circ \varpi$, implies that the morphism $\varphi$ factors through $X_{R'}$ (the pullback of $X$ over $\text{Spec}(R')$), hence there is a finite morphism $h: Z \to X_{R'}$ such that $\varphi= \gamma \circ h$, where $\gamma$ is the projection $\gamma: X_{R'} \to X$. This situation will be of great use in \S \ref{sez:PrimetoP}.

Before continuing with the next step of the proof, we mention two results that will be useful later.
\begin{proposition}\label{leizhang} 
Assume that $X$ is either a reduced connected and proper scheme over a field $k$ such that  $\mathcal{O}_X(X)=k$, or a Nori-reduced finite torsor  over a reduced connected and proper scheme $T$ over a field $k$ such that  $\mathcal{O}_X(X)=k$. Assume moreover that $X$ has a $k$-rational point $x \in X(k)$, then for every essentially finite sheaf $V$ over $X$, the canonical morphism $V(X) \otimes_k \mathcal{O}_X \rightarrow V$, is an inclusion of $V(X) \otimes_k \mathcal{O}_X$ into the maximal trivial sub-sheaf of $V$. \end{proposition}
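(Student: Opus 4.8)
The plan is to argue Tannakianly, treating first the case where $X$ is reduced, connected and proper over $k$ with $\mathcal{O}_X(X)=k$ and equipped with a point $x\in X(k)$. By Nori's theory the category $\hEF(X)$ of essentially finite sheaves on $X$ is a neutral Tannakian category over $k$, with unit object $\mathcal{O}_X$ and fibre functor $\omega:=x^{*}$, whose Tannaka dual is the fundamental group scheme $\pi:=\pi(X,x)$; thus $\omega$ induces an equivalence $\hEF(X)\simeq\Rep_k(\pi)$ carrying $V$ to the fibre $x^{*}V$ with its natural $\pi$-action. The first step is to rewrite the source of the canonical morphism. Since $\hEF(X)$ is a full subcategory of $\Vect(X)$ and $\mathcal{O}_X(X)=k$, one has
\[
V(X)=H^{0}(X,V)=\Hom_{\mathcal{O}_X}(\mathcal{O}_X,V)=\Hom_{\hEF(X)}(\mathcal{O}_X,V),
\]
and under the equivalence this last space becomes $\Hom_{\pi}(k,x^{*}V)=(x^{*}V)^{\pi}$, the space of $\pi$-invariants in the fibre.

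The second step is to recognize the morphism of the statement as the canonical co-unit $\mathrm{ev}\colon\Hom(\mathcal{O}_X,V)\otimes_k\mathcal{O}_X\to V$, $\phi\otimes a\mapsto a\,\phi$, and to evaluate it through $\omega$. The functor sends $\mathrm{ev}$ to the tautological inclusion $(x^{*}V)^{\pi}\hookrightarrow x^{*}V$ of the invariant subspace. Because $\omega$ is faithful and exact it reflects monomorphisms, so $\mathrm{ev}$ is injective; and because $(x^{*}V)^{\pi}$ is precisely the trivial (i.e. $\mathbf{1}$-isotypic) $\pi$-subrepresentation of $x^{*}V$, the image of $\mathrm{ev}$ is the largest subobject of $V$ that is a direct sum of copies of $\mathcal{O}_X$. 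Here I also use that a subsheaf of the essentially finite $V$ which is abstractly isomorphic to $\mathcal{O}_X^{\oplus m}$ automatically lies in $\hEF(X)$, so that trivial subobjects in $\hEF(X)$ coincide with trivial subsheaves of $V$. This identifies $\im(\mathrm{ev})$ with the maximal trivial subsheaf $V^{\triv}$ and yields the desired inclusion $V(X)\otimes_k\mathcal{O}_X\hookrightarrow V^{\triv}\subseteq V$, in fact an equality onto $V^{\triv}$.

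It remains to treat the second case, in which $X$ is a Nori-reduced finite torsor $p\colon X\to T$ over a reduced connected proper $T/k$ with $\mathcal{O}_T(T)=k$. The reason for isolating this case, and what I expect to be the main obstacle, is that when the structure group scheme has an infinitesimal part the total space $X$ need not be reduced, so Nori's fundamental group scheme is not directly available on $X$ and the argument above cannot be quoted verbatim. The plan is to reduce to the reduced case by transporting the statement along the torsor: a Nori-reduced torsor already satisfies $\mathcal{O}_X(X)=k$ and carries the point $x$, and essentially finite sheaves on $X$ are governed by those on $T$ together with the action of the structure group, through faithfully flat descent along $p$ and the homotopy exact sequence relating $\pi(X,x)$ and $\pi(T,p(x))$. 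Granting that $\hEF(X)\simeq\Rep_k(\pi(X,x))$ in this setting, the same invariants computation applies: $\omega$ sends $\mathrm{ev}$ to the inclusion of $\pi(X,x)$-invariants, giving injectivity and the identification of the image with the maximal trivial subsheaf exactly as before.
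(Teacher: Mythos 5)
Your first case is correct and follows exactly the route the paper takes: the paper's proof is a citation of \cite[Lemma 2.2]{Z13}, whose argument is the Tannakian invariants computation you spell out, namely identifying $V(X)=\Hom_{\hEF(X)}(\mathcal{O}_X,V)$ with the invariants $(x^{*}V)^{\pi}$ under $\hEF(X)\simeq\Rep_k(\pi(X,x))$, and recognizing the evaluation map as the inclusion of the invariant subspace, hence a monomorphism whose image is the maximal trivial subobject.

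The gap is in the second case, and it sits exactly at the point you flag and then ``grant.'' The only reason the torsor case is listed separately in the hypothesis is that the total space $X$ of a Nori-reduced finite torsor can fail to be reduced (when the structure group has an infinitesimal part), so Nori's construction of $\pi(X,x)$ and of the Tannakian structure on $\hEF(X)$ is not available. The assertion that $\hEF(X)\simeq\Rep_k(\pi(X,x))$ nevertheless holds for such $X$ is therefore the entire mathematical content of this case; the paper supplies it by invoking \cite[Theorem I (3)]{ABETZ19}, which says precisely that a finite Nori-reduced torsor has a \emph{tannakian constructed} fundamental group scheme. Your proposed substitute --- faithfully flat descent along $p$ together with the homotopy exact sequence relating $\pi(X,x)$ and $\pi(T,p(x))$ --- does not fill this in. Descent along the torsor $p\colon X\to T$ identifies sheaves on $T$ with \emph{$G$-equivariant} sheaves on $X$, not with plain sheaves on $X$, so it does not describe $\hEF(X)$; and the homotopy exact sequence presupposes that $\pi(X,x)$ exists and governs $\hEF(X)$, i.e.\ it presupposes the very equivalence you are granting. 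Characterizing which bundles on the (possibly non-reduced) cover $X$ are essentially finite in terms of data on $T$ is the substance of \cite{ABETZ19}, not a routine reduction, so your proof is complete only for the first alternative of the hypothesis; for the second you must either quote that result, as the paper does, or prove it.
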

\begin{proof}
Though the setting is a bit different the proof is exactly the same as in \cite[Lemma 2.2]{Z13}, considering that a finite Nori-reduced torsor has a \emph{tannakian constructed} fundamental group scheme, \cite[Theorem I (3)]{ABETZ19}. 
\end{proof}
\begin{remark} From this result we can draw the following conclusion: if $V$ is a essentially finite sheaf of rank $v$ with global sections isomorphic to $k^{\oplus v}$, then this implies that $V \simeq \mathcal{O}_X^{\oplus v}$. Indeed because $V(X) \simeq k^{\oplus v}$ then $V(X) \otimes_k \mathcal{O}_X \simeq \mathcal{O}_X^{\otimes v}$ (this if $\mathcal{O}_X(X)=k$, which is true for $X$ as in the proposition), then there exists an immersion $\mathcal{O}_X^{\oplus v} \rightarrow V$ whose quotient is thus an essentially finite (hence locally free) sheaf of rank $0$, which implies that it must be necessarily the sheaf zero and therefore $V \simeq \mathcal{O}_X^{\oplus v}$.
\end{remark}
The next result, which will also prove to be useful, follows directly from \cite{TZ17}.
\begin{proposition}\label{tonini} \cite{TZ17}
Let $X$ be a proper, connected, normal and reduced scheme over a field $k$ and $V \in \text{Vect}(X)$, then $V$ is essentially finite if and only if there is a finite morphism $g:T \rightarrow X$ such that $V$ is trivialized by $g$.
\end{proposition}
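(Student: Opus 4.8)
The plan is to prove the two implications separately, treating the ``only if'' direction as routine and isolating the ``if'' direction as the crux. Throughout I fix, as in the surrounding setup, a $k$-rational base point $x$ and use that $\mathcal{O}_X(X)=k$, so that $x^{*}$ is a fibre functor.

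For the forward implication, suppose $V$ is essentially finite. Under the stated hypotheses the category of essentially finite bundles on $X$ is tannakian and, by the construction of the fundamental group scheme recalled after Proposition~\ref{leizhang} and in \cite{ABETZ19}, is equivalent to the category of finite-dimensional representations of the pro-finite Nori fundamental group scheme $\pi:=\pi^{N}(X,x)$. The representation attached to $V$ factors through a finite quotient group scheme $Q=\pi/N$, and this quotient is realised by a Nori-reduced $Q$-torsor $g\colon T\to X$. Since $Q$ is finite, $g$ is a finite morphism, and since $V$ is the bundle associated with the $Q$-representation $x^{*}V$, one has a canonical isomorphism $g^{*}V\simeq (x^{*}V)\otimes_{k}\mathcal{O}_{T}\simeq \mathcal{O}_{T}^{\oplus v}$. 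Thus $V$ is trivialized by the finite morphism $g$.

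For the reverse implication, suppose $g\colon T\to X$ is finite and $g^{*}V\simeq \mathcal{O}_{T}^{\oplus v}$; we may assume $g$ surjective, and after replacing $T$ by the normalization of a component dominating $X$ we may assume $T$ integral and normal (this is where the normality of $X$ enters). Consider the tannakian subcategory $\langle V\rangle$ of $\text{Vect}(X)$ generated by $V$, that is, the full subcategory of subquotients of finite direct sums of the bundles $V^{\otimes a}\otimes(V^{\vee})^{\otimes b}$, equipped with the fibre functor $x^{*}$. Since $g^{*}$ is an exact tensor functor and $g^{*}V$ is trivial, every object of $\langle V\rangle$ is again trivialized by the same finite morphism $g$. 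Let $G=\mathbf{Aut}^{\otimes}_{k}(x^{*})$ be the tannakian dual of $\langle V\rangle$. The aim is to show that $G$ is a finite group scheme: once this is known, $\langle V\rangle\simeq \text{Rep}_{k}(G)$ with $G$ finite, the associated universal torsor $P\to X$ is finite and trivializes $V$, so $V$ is a representation of a finite group scheme and hence essentially finite.

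The main obstacle is precisely the finiteness of $G$. The triviality of $g^{*}W$ for every $W\in\langle V\rangle$ yields uniform bounds: writing $A=H^{0}(T,\mathcal{O}_{T})$, which is finite-dimensional over $k$ because $T$ is proper over $k$, the injection $H^{0}(X,W)\hookrightarrow H^{0}(T,g^{*}W)=A^{\oplus w}$ bounds the global sections and endomorphisms of all objects of $\langle V\rangle$ in terms of their ranks and the fixed integer $\dim_{k}A$. The real difficulty is to convert such bounds, coming from an \emph{arbitrary} finite cover, into finiteness of $G$, equivalently into the existence of a genuine finite \emph{torsor} refining $g$ along which $V$ becomes trivial. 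When $g$ is separable this is the classical Galois-closure construction (take the normalization of $X$ in a Galois closure of $k(T)/k(X)$); in general, where $g$ may be inseparable or non-flat, the naive candidates fail — for instance $g_{*}\mathcal{O}_{T}$ need not itself be essentially finite, as the squaring map of $\mathbb{P}^{1}$ shows — and one must instead invoke the fundamental-gerbe techniques of \cite{TZ17}: the finite cover factors through a finite quotient of the Nori fundamental gerbe of $X$, and descent along it forces $V$ to be pulled back from that gerbe, hence essentially finite. Controlling this descent through possibly non-flat finite morphisms, using the normality of $X$, is the heart of the argument and the step I expect to be hardest.
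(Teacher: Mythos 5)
The paper offers no proof of this proposition at all: it is quoted from \cite{TZ17} (``follows directly from \cite{TZ17}''), so the real question is whether your argument establishes the result independently. Your forward implication does, and it is the standard argument: an essentially finite bundle corresponds to a representation of the fundamental group scheme, that representation factors through a finite quotient $Q$, and the pullback of $V$ along the associated Nori-reduced $Q$-torsor $g\colon T\to X$ is canonically $x^{*}V\otimes_k\mathcal{O}_T$, hence trivial. (Minor caveat: the statement assumes neither a rational point nor $\mathcal{O}_X(X)=k$; dispensing with these is exactly what the gerbe formulation of \cite{TZ17} is for. In the paper's applications a rational point is always available, so this is cosmetic.)

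The reverse implication --- the actual content of the proposition --- has a genuine gap, in two places. First, ``the tannakian subcategory $\langle V\rangle$ of $\text{Vect}(X)$ generated by $V$'' with tannakian dual $G=\mathbf{Aut}^{\otimes}_{k}(x^{*})$ does not exist a priori: $\text{Vect}(X)$ is not abelian, subquotients of vector bundles taken in $\text{Coh}(X)$ need not be vector bundles, and for an arbitrary $V$ merely known to be trivialized by a finite (possibly non-flat, possibly inseparable) morphism there is no reason that $\langle V\rangle$ carries an abelian rigid tensor structure for which $x^{*}$ is exact and faithful. Showing that bundles trivialized by such a $g$ live in a tannakian category is precisely the substance of \cite{BdS11}, \cite{HdS19} and \cite{TZ17} (this is where normality of $X$ and $H^{0}$-flatness issues enter), so taking it as a starting point is essentially circular. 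Second, even granting that setup, you concede that the finiteness of $G$ --- the crux --- must be imported from ``the fundamental-gerbe techniques of \cite{TZ17}'', and your one-line description of those techniques is incorrect: an arbitrary finite cover does \emph{not} factor through a finite quotient of the Nori fundamental gerbe. Your own example makes this plain: $\mathbb{P}^{1}$ is simply connected, so its fundamental gerbe is trivial, yet the squaring map is a nontrivial finite cover; what \cite{TZ17} proves is a statement about the bundle, not about the cover. So what you have is a correct proof of the easy half plus a citation of the hard half --- which is, in effect, what the paper itself does --- but the tannakian scaffolding you interpose in between is neither justified nor actually used.
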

Now, we deal with the category $\mathcal{T}_K$ that is obtained from $\mathcal{T}$ by extension of scalars to $K$, the field of fractions of $R$.
\begin{proposition}\label{Tkabeliana}
The category $\mathcal{T}_K$, which is the category obtained from $\mathcal{T}$ by extension of scalars, is abelian.
\end{proposition}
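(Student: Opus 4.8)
The plan is to transport the problem to the generic fibre, where all torsion obstructions disappear, and to identify $\mathcal{T}_K$ with a full subcategory of an abelian category. Write $j\colon X_K\hookrightarrow X$ for the inclusion of the generic fibre and let $j^{*}\colon \mathcal{T}\to\text{Vect}(X_K)$ be the restriction functor, extended $K$-linearly to $j_K^{*}\colon\mathcal{T}_K\to\text{Vect}(X_K)$. The first step is to show $j_K^{*}$ is fully faithful. For $V,W\in\mathcal{T}$ the sheaf $\mathcal{H}om(V,W)$ is coherent and $X$ is proper and flat over $R$, so flat base change along $\Spec K\to\Spec R$ gives
\[
\Hom_{\mathcal{T}_K}(V,W)=\Hom_{X}(V,W)\otimes_R K\cong H^{0}\!\big(X_K,\mathcal{H}om(V,W)_K\big)=\Hom_{X_K}(V_K,W_K).
\]
Hence $j_K^{*}$ is an equivalence of $\mathcal{T}_K$ onto the full subcategory $\mathcal{C}\subseteq\text{Vect}(X_K)$ of bundles of the form $V_K$ with $V\in\mathcal{T}$; each such $V_K$ is trivialized by $\varphi_K$ and is therefore essentially finite by Proposition \ref{tonini}, so $\mathcal{C}$ sits inside the abelian (tannakian) category $\operatorname{EF}(X_K)$. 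Since $\mathcal{T}_K$ automatically inherits the zero object and the biproducts of $\mathcal{T}$, it suffices to prove that $\mathcal{C}$ is closed under the kernels and cokernels computed inside $\operatorname{EF}(X_K)$: a full additive subcategory of an abelian category that is closed under the ambient kernels and cokernels is itself abelian, with exact inclusion.

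Next I would compute those kernels and cokernels. As $K=\Frac(R)$, every morphism of $\mathcal{T}_K$ has the form $\psi\otimes\tfrac1s$ with $\psi\in\Hom_{\mathcal{T}}(V,W)$ and $s\in R\setminus\{0\}$; since $\tfrac1s$ is a unit of $K$, this morphism has the same kernel, image and cokernel as $\psi_K$. By Proposition \ref{nucleoseimagenes} the morphism $\psi$ already admits a kernel and an image \emph{inside} $\mathcal{T}$, and since $j^{*}$ is flat, hence exact, these restrict to $\operatorname{Ker}(\psi_K)=(\operatorname{Ker}\psi)_K$ and $\operatorname{Im}(\psi_K)=(\operatorname{Im}\psi)_K$, both lying in $\mathcal{C}$. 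The genuine difficulty is the cokernel: over $X$ itself the category $\mathcal{T}$ need \emph{not} possess cokernels, because the cokernel of a map of free $\mathcal{O}_{R'}$-modules may be torsion over the closed point, and it is precisely this phenomenon that forces one to invert $\pi$ by passing to $K$. I do not expect to build the cokernel integrally.

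To circumvent this, I would exploit the rigidity of $\mathcal{T}$ and trade the cokernel for a kernel. In the rigid abelian category $\operatorname{EF}(X_K)$ one has, for $\phi\colon V_K\to W_K$, the identification $\operatorname{coker}(\phi)\cong\big(\operatorname{Ker}(\phi^{\vee})\big)^{\vee}$, obtained by dualizing the exact sequence $0\to\operatorname{Im}\phi\to W_K\to\operatorname{coker}\phi\to 0$ of essentially finite (hence locally free) sheaves, dualization being exact on vector bundles. Now $\phi^{\vee}=\psi^{\vee}\otimes\tfrac1s$ with $\psi^{\vee}\colon W^{\vee}\to V^{\vee}$ a morphism of $\mathcal{T}$, using that $\mathcal{T}$ is closed under duals; Proposition \ref{nucleoseimagenes} then yields $\operatorname{Ker}\psi^{\vee}\in\mathcal{T}$, whence $(\operatorname{Ker}\psi^{\vee})^{\vee}\in\mathcal{T}$ as well. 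Restricting to the generic fibre gives $\operatorname{coker}\phi\cong\big((\operatorname{Ker}\psi^{\vee})^{\vee}\big)_K\in\mathcal{C}$. Thus $\mathcal{C}$ is closed under kernels and cokernels, hence abelian, and through the equivalence $j_K^{*}$ the same holds for $\mathcal{T}_K$. The main obstacle, as flagged, is the construction of cokernels, which simply do not exist integrally; the device that resolves it is duality, which converts the cokernel into a kernel so that the existence of kernels and images in $\mathcal{T}$ (Proposition \ref{nucleoseimagenes}) becomes the single substantive input.
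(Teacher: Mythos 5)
Your proof is correct, but it follows a genuinely different route from the paper's. The paper proves more than abelianness: it shows that $F=j^{*}$ is an \emph{equivalence} $\mathcal{T}_K\simeq\mathcal{T}_2$, where $\mathcal{T}_2$ is the category of \emph{all} bundles on $X_K$ trivialized by $\varphi_K$. Full faithfulness is the same flat base change computation you give; the bulk of the paper's proof is essential surjectivity, obtained by identifying $\mathcal{T}_2$ with $\mathcal{T}_3=\text{EF}(X_K,\{f_{*}\mathcal{O}_Y\})$ (Lemma \ref{T2igualT3}) and then spreading sub-objects and quotients out to integral models (Lemmas \ref{propB} and \ref{propC}, which rest on Proposition \ref{propA} and Lemma \ref{plenfiel}); abelianness is then imported from the tannakian category $\mathcal{T}_3$. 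You bypass essential surjectivity entirely: you identify $\mathcal{T}_K$ with its essential image $\mathcal{C}\subseteq\text{EF}(X_K)$ and check that $\mathcal{C}$ is stable under the ambient kernels and cokernels, the duality identity $\operatorname{coker}(\phi)\simeq(\ker\phi^{\vee})^{\vee}$ converting the problematic cokernel into a kernel, so that Proposition \ref{nucleoseimagenes} becomes the only substantive input; this is shorter and its dependencies are lighter. Two remarks. First, your argument silently uses that in Nori's category $\text{EF}(X_K)$ kernels and cokernels are the sheaf-theoretic ones, so that $\operatorname{im}\phi$ is a subbundle of $W_K$, $\operatorname{coker}\phi$ is locally free, and both short exact sequences dualize exactly; this is standard (Nori's semistability theory) but deserves an explicit citation, since it is exactly what legitimizes the identity $\operatorname{coker}(\phi)\simeq(\ker\phi^{\vee})^{\vee}$. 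Second, the paper's heavier proof is not wasted effort: the equivalence $\mathcal{T}_K\simeq\text{EF}(X_K,\{f_{*}\mathcal{O}_Y\})\simeq\Rep_K(G)$ delivered by essential surjectivity is precisely what \S\ref{sec:proof} invokes to conclude $M_K\simeq G$ and that the constructed torsor $f_1$ restricts to the given $f$ over the generic fibre; so your argument proves the proposition as stated, but if it replaced the paper's proof, Lemmas \ref{propB} and \ref{propC} would still have to be proved and used downstream.
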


To prove this we introduce a new category, $\mathcal{T}_2$, whose objects are the vector bundles $V \in \text{Vect}(X_K)$ such that $(\lambda \circ f_{K^{\prime}})^{*}V \simeq \mathcal{O}_{Y^{\prime}}^{\oplus v}$, that is the vector bundles over $X_K$ that are trivialized by $\lambda \circ f_{K^{\prime}}= \varphi_K$.

\begin{lemma}\label{T2igualT3}
Let $\mathcal{T}_3$ be the category whose objects are the vector bundles over $X_K$ trivialized by the torsor $f$, then $\mathcal{T}_2= \mathcal{T}_3$.
\end{lemma}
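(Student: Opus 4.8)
The plan is to prove the two inclusions $\mathcal{T}_3 \subseteq \mathcal{T}_2$ and $\mathcal{T}_2 \subseteq \mathcal{T}_3$ separately, exploiting the factorisation recorded in Figure \ref{figura1}. Write $\alpha \colon Y' \to Y$ for the projection of the fibre product $Y' = Y \times_{X_K} X_{K'}$, so that $\alpha$ is exactly the base change of $Y \to \operatorname{Spec}(K)$ along the finite extension $K'/K$; in particular $\alpha$ is finite and faithfully flat, and the commutativity of the fibre product square gives the identity $\lambda \circ f_{K'} = f \circ \alpha$. Hence for every $V \in \text{Vect}(X_K)$ one has $(\lambda \circ f_{K'})^{*}V \simeq \alpha^{*}f^{*}V$, so that $V \in \mathcal{T}_2$ precisely when $\alpha^{*}f^{*}V$ is trivial, while $V \in \mathcal{T}_3$ precisely when $f^{*}V$ is trivial. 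The inclusion $\mathcal{T}_3 \subseteq \mathcal{T}_2$ is then immediate: if $f^{*}V \simeq \mathcal{O}_Y^{\oplus v}$ then $\alpha^{*}f^{*}V \simeq \mathcal{O}_{Y'}^{\oplus v}$.

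For the reverse inclusion I would take $V \in \mathcal{T}_2$, so that $\alpha^{*}f^{*}V \simeq \mathcal{O}_{Y'}^{\oplus v}$ with $v = \text{rank}(V) = \text{rank}(f^{*}V)$ (the last equality because $f$ is faithfully flat). First I would record that $f^{*}V$ is essentially finite on $Y$: indeed $V$ is trivialised by the finite morphism $\varphi_K = \lambda \circ f_{K'}$, hence $V$ is essentially finite on $X_K$ by Proposition \ref{tonini} (recall that $X_K$ is integral, normal and proper over $K$ with $\mathcal{O}_{X_K}(X_K)=K$), and essential finiteness is stable under the exact pullback functor $f^{*}$ (finite bundles pull back to finite bundles, and $f^{*}$ preserves sub-quotients), so $f^{*}V$ is essentially finite on $Y$ without any normality assumption on $Y$. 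Next I would compute the global sections of $f^{*}V$. Since $\mathcal{O}_{Y'}(Y')=K'$ we get $H^0(Y', \alpha^{*}f^{*}V) \simeq (K')^{\oplus v}$; on the other hand flat base change along the finite extension $K'/K$ yields $H^0(Y', \alpha^{*}f^{*}V) \simeq H^0(Y, f^{*}V)\otimes_K K'$, whence $\dim_K H^0(Y, f^{*}V) = v$, i.e. $H^0(Y, f^{*}V) \simeq K^{\oplus v}$. Now $Y$ is a Nori-reduced finite torsor over $X_K$ carrying the rational point $y$, so Proposition \ref{leizhang} and the Remark following it apply to $Y$: an essentially finite sheaf of rank $v$ whose global sections form a $K$-space of dimension $v$ is isomorphic to $\mathcal{O}_Y^{\oplus v}$. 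Applying this to $f^{*}V$ gives $f^{*}V \simeq \mathcal{O}_Y^{\oplus v}$, that is $V \in \mathcal{T}_3$.

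The only delicate point is the inclusion $\mathcal{T}_2 \subseteq \mathcal{T}_3$, and the reason is that triviality of a vector bundle does \emph{not} descend along a faithfully flat morphism such as $\alpha$: a priori $\alpha^{*}f^{*}V$ could be abstractly trivial without $f^{*}V$ being trivial, since $f^{*}V$ carries no canonical descent datum matching $\mathcal{O}_{Y'}^{\oplus v}$. This is exactly what the essential finiteness input is designed to circumvent: global sections are insensitive to the base change $K'/K$ by flat base change, while Proposition \ref{leizhang} allows one to reconstruct an essentially finite bundle with the maximal number of global sections as a trivial bundle. I therefore expect the verification that $f^{*}V$ is essentially finite, together with the cohomological base-change computation of its global sections, to be the substantive part of the argument, the remaining steps being formal.
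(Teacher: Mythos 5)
Your proposal is correct and follows essentially the same route as the paper: the easy inclusion $\mathcal{T}_3 \subseteq \mathcal{T}_2$, then for the converse the flat base change isomorphism $f^{*}V(Y)\otimes_K K' \simeq \alpha^{*}f^{*}V(Y')$ to get $f^{*}V(Y)\simeq K^{\oplus v}$, essential finiteness of $f^{*}V$ via Proposition \ref{tonini}, and finally Proposition \ref{leizhang} with its Remark applied to the Nori-reduced torsor $Y$ to conclude $f^{*}V\simeq \mathcal{O}_Y^{\oplus v}$. Your explicit justification of why essential finiteness passes through $f^{*}$, and of why triviality cannot simply be descended along $\alpha$, only makes explicit what the paper leaves implicit.
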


\begin{proof}
As morphisms in both categories are clearly the same it is sufficient to prove that $\text{Obj}(\mathcal{T}_2) = \text{Obj}(\mathcal{T}_3)$. The inclusion $\text{Obj}(\mathcal{T}_3) \subseteq \text{Obj}(\mathcal{T}_2)$ is obvious. Conversely, let $V \in \text{Obj}(\mathcal{T}_2)$, meaning that $(\lambda \circ f_{K^{\prime}})^{*}V \simeq \mathcal{O}_{Y^{\prime}}^{\oplus v}$.
Being $\alpha$ flat, there is an isomorphism
\begin{align*}
f^{*}V(Y) \otimes_K K^{\prime} \simeq \alpha^{*}f^{*}V(Y^{\prime}).
\end{align*}
As $\alpha^{*}f^{*}V \simeq (\lambda \circ f_{K^{\prime}})^{*}V  \simeq \mathcal{O}_{Y^{\prime}}^{\oplus v}$, we obtain that $f^{*}V(Y) \simeq K^{\oplus v}$. Consequently we have the following
\begin{enumerate}
\item $f^{*}V$ has rank $v$.
\item $f^{*}V(Y) \simeq K^{\oplus v}$.
\item $f^{*}V \in \text{Obj}(\text{EF}(Y))$. [This is because $V \in \text{Obj}(\text{EF}(X_K))$ due to Proposition \ref{tonini}].
\end{enumerate}
Using Proposition \ref{leizhang} and the remark that followed we obtain $f^{*}V \simeq \mathcal{O}_Y^{\oplus v}$.
\end{proof}
\begin{remark}\label{T2abeliana}
The category $\mathcal{T}_3$  is the full subcategory of $\text{EF}(X_K)$ - the category of essentially finite vector bundles over $X_K$- generated by $f_{*}\mathcal{O}_Y$, which is tannakian and so, in particular, abelian. We will denote it also by $\text{EF}(X_K, \{ f_{*}\mathcal{O}_Y \})$. Note that if $V$ is an object of $\mathcal{T}$, then $j^{*}V$ is an object of $\mathcal{T}_2$, recalling that $\varphi_K= \lambda \circ f_{K'}$.
\end{remark}
We now recall a proposition to be used later.
\begin{proposition}\label{propA} \cite[Proposition 2.8.1]{EGAIV.2} Let $S$ be a Dedekind scheme with generic point $\eta$, $f:X \rightarrow S$ a morphism of schemes, $X_{\eta}$ the generic fibre, $i: X_{\eta} \rightarrow X$ the canonical morphism. Let $\mathcal{F}$ be a quasi-coherent $\mathcal{O}_X$-module, $\mathcal{F}_{\eta}:= i^{*}\mathcal{F}$, $\mathcal{G}^{\prime}$ a $\mathcal{O}_{X_{\eta}}$-module, quotient of $\mathcal{F}_{\eta}$ and $\overline{\mathcal{G}^{\prime}}$ the $\mathcal{O}_X$-module image of $\mathcal{F}$ obtained by means of the composition
\begin{align*}
\mathcal{F} \rightarrow i_{*}i^{*}\mathcal{F} \rightarrow i_{*}\mathcal{G}^{\prime}
\end{align*}
Then $\overline{\mathcal{G}^{\prime}}$ is a quasi-coherent and $S$-flat $\mathcal{O}_X$-module quotient of $\mathcal{F}$ such that $i^{*}(\overline{\mathcal{G}^{\prime}})=\mathcal{G}^{\prime}$, and also it is the only $\mathcal{O}_X$-module quotient of $\mathcal{F}$ with these properties. \end{proposition}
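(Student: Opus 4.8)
The plan is to reduce everything to a local affine computation and then exploit the fact that over a Dedekind base $S$-flatness coincides with $S$-torsion-freeness. Since the construction of $\overline{\mathcal{G}'}$ and all the asserted properties are local on $X$ and on $S$, I would first replace $S$ by the spectrum of one of its local rings: the generic point contributes $S=\Spec(K)$, for which $i=\mathrm{id}$ and the claim is trivial, so the only substantial case is $S=\Spec(R)$ with $R$ a discrete valuation ring with fraction field $K$, and $X=\Spec(A)$ affine. Writing $\mathcal{F}=\widetilde{M}$ for an $A$-module $M$, the generic fibre $\mathcal{F}_\eta$ corresponds to $M_K:=M\otimes_R K$ and $\mathcal{G}'$ to a quotient $M_K\twoheadrightarrow N'$ of $A_K$-modules, where $A_K:=A\otimes_R K$. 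Under these identifications $\mathcal{F}\to i_*i^*\mathcal{F}$ is the localization $M\to M_K$, and $\overline{\mathcal{G}'}$ is the image $\overline{N}$ of the composite $M\to M_K\to N'$.

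Next I would verify the positive assertions. Quasi-coherence is automatic, and $\overline{N}$ is a quotient of $M$ by construction. For $S$-flatness the essential remark is that $i_*\mathcal{G}'$ corresponds to $N'$ regarded as an $R$-module, which is $S$-torsion-free: since $N'$ is a $K$-vector space, multiplication by any nonzero element of $R$ acts invertibly on it. Consequently the $A$-submodule $\overline{N}\subseteq N'$ is $R$-torsion-free, and a torsion-free module over a Dedekind (here even principal) domain is flat; this gives the $S$-flatness of $\overline{\mathcal{G}'}$. For the identification $i^*(\overline{\mathcal{G}'})=\mathcal{G}'$ I would use exactness of $i^*=(-)\otimes_R K$: localizing the surjection $M\twoheadrightarrow\overline{N}$ and the inclusion $\overline{N}\hookrightarrow N'$ shows that $\overline{N}\otimes_R K$ is the image of $M_K\to N'\otimes_R K$. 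As $N'$ is already a $K$-vector space one has $N'\otimes_R K\cong N'$, and $M_K\to N'$ is the given surjection, whence $\overline{N}\otimes_R K=N'$ compatibly with the quotient structure.

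Finally, for uniqueness let $\mathcal{H}$ be any $S$-flat quotient $M\twoheadrightarrow H$ of $\mathcal{F}$ with $i^*\mathcal{H}=\mathcal{G}'$. Flatness makes $H$ torsion-free, so the unit map $H\to H\otimes_R K=N'$ is injective, i.e.\ $\mathcal{H}\hookrightarrow i_*i^*\mathcal{H}=i_*\mathcal{G}'$; the composite $M\twoheadrightarrow H\hookrightarrow N'$ must then agree with $M\to N'$, forcing $H$ to equal its own image $\overline{N}$ inside $N'$.

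I expect the only genuine subtlety to lie in this last step, namely checking that the isomorphism $i^*\mathcal{H}\cong\mathcal{G}'$ is compatible with the two quotient maps out of $\mathcal{F}$, so that $\mathcal{H}$ and $\overline{\mathcal{G}'}$ are identified as \emph{subobjects} of $i_*\mathcal{G}'$ and not merely abstractly isomorphic. The equivalence of flatness and torsion-freeness over the Dedekind base is what makes all of the above work, and is the one place where the hypothesis on $S$ is genuinely used.
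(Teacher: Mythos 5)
The paper does not actually prove this proposition---it is recalled verbatim from EGA IV, Proposition 2.8.1, with only a citation---so the only comparison available is with the standard argument, which your proposal reproduces correctly and in essentially the same way EGA does: reduce to the affine case $X=\Spec(A)$ over $S=\Spec(R)$ with $R$ a discrete valuation ring, identify $\overline{\mathcal{G}'}$ with the image $\overline{N}$ of $M\to N'$, use that over such a base flatness of a module is equivalent to torsion-freeness, and get uniqueness by embedding any $S$-flat quotient $H$ into $H\otimes_R K=N'$ compatibly with the maps out of $M$ (your closing caveat, that $i^*\mathcal{H}\cong\mathcal{G}'$ must be an identification of quotients of $\mathcal{F}_\eta$ and not an abstract isomorphism, is exactly the correct reading of the uniqueness clause). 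The one hypothesis you use silently is quasi-coherence of $\mathcal{G}'$---present in EGA's original statement but dropped in the paper's transcription---which is needed both to write $\mathcal{G}'=\widetilde{N'}$ in your affine reduction and for the proposition to be true at all, since $i^*\overline{\mathcal{G}'}$ is automatically quasi-coherent.
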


The following result ensures that, in our case, the quotient of the latter proposition is locally free:

\begin{lemma}\label{propB}
Let us take $V \in \text{Obj}(\mathcal{T}_K)$, set $V_K:= j^{*}V$ and let $Q \in \text{Obj}(\mathcal{T}_2)$ be a quotient of $V_K$ (through an epimorphism $u: V_K \rightarrow Q$). Then the only quotient $Q^{\prime}$ of $V$, $R'$-flat, such that $j^{*}Q^{\prime} \simeq Q$, is an object of $\mathcal{T}$ (then in particular it is locally free).
\end{lemma}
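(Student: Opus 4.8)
The plan is to reduce everything, through the finite faithfully flat morphism $\varphi$, to a single freeness statement over $Z$, and then to descend. Concretely, I will show that $\varphi^{*}Q'$ is a free $\mathcal{O}_Z$-module of rank $q:=\operatorname{rank}(Q)$. Since $\varphi$ is finite and faithfully flat (hence an fppf covering), flat descent then gives that $Q'$ is locally free over $X$, and the freeness of $\varphi^{*}Q'$ is exactly the assertion that $Q'$ is trivialized by $\varphi$; together these say $Q'\in\operatorname{Obj}(\mathcal{T})$, as desired. So the whole problem is moved onto $Z$.

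First I would analyse the generic fibre. By Remark \ref{T2abeliana} we have $V_K=j^{*}V\in\operatorname{Obj}(\mathcal{T}_2)$, and $Q\in\operatorname{Obj}(\mathcal{T}_2)$ by hypothesis; since $\mathcal{T}_2=\mathcal{T}_3$ is abelian (Lemma \ref{T2igualT3} and Remark \ref{T2abeliana}), the kernel $N_K:=\ker(u\colon V_K\to Q)$ is again an object of $\mathcal{T}_2$, hence a vector bundle trivialized by $\varphi_K$. Thus on $X_K$ there is a short exact sequence
\[
0\to N_K\to V_K\xrightarrow{\,u\,} Q\to 0
\]
all of whose terms are trivialized by $\varphi_K$. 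Let $N:=\ker(u'\colon V\to Q')$, where $u'$ is the quotient map onto the flat quotient $Q'$ furnished by Proposition \ref{propA}; then $N$ is flat, being the kernel of a surjection of flat sheaves, and $j^{*}N=N_K$ because $j^{*}$ is exact. Pulling back along the flat morphism $\varphi$ (so that $\varphi^{*}$ is exact) yields a short exact sequence on $Z$
\[
0\to\varphi^{*}N\to\varphi^{*}V\to\varphi^{*}Q'\to 0,\qquad \varphi^{*}V\simeq\mathcal{O}_Z^{\oplus v},
\]
whose restriction to the generic fibre $Y'=Z_{K'}$ recovers $\varphi_K^{*}$ of the sequence above, i.e. a short exact sequence of free $\mathcal{O}_{Y'}$-modules.

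The heart of the argument, and the step I expect to be the main obstacle, is to promote this \emph{generic} freeness to genuine freeness of $\varphi^{*}Q'$ over all of $Z$; this is precisely where the structure produced in Proposition \ref{nucleoseimagenes} enters, namely the proper faithfully flat $\delta\colon Z\to\operatorname{Spec}(R')$ with a section and with $\mathcal{O}_Z(Z)=R'$ a \emph{discrete valuation ring}. I would set $N_0:=H^{0}(Z,\varphi^{*}N)$. Left exactness of $\delta_{*}$ embeds $N_0$ into $H^{0}(Z,\mathcal{O}_Z^{\oplus v})=R'^{\oplus v}$, so $N_0$ is a finite torsion-free, hence free, $R'$-module; flat base change along $R'\to K'$ identifies its rank as $v-q$, and comparison with the (saturated) generic kernel shows $N_0$ is saturated in $R'^{\oplus v}$, so that $R'^{\oplus v}/N_0\simeq R'^{\oplus q}$ is free. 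Applying the exact functor $\delta^{*}$ to the surjection $R'^{\oplus v}\to R'^{\oplus v}/N_0$ produces a surjection of free $\mathcal{O}_Z$-modules $g\colon\mathcal{O}_Z^{\oplus v}\to\mathcal{O}_Z^{\oplus q}$ with kernel $\delta^{*}N_0$, and by construction $\ker(g)$ and $\varphi^{*}N$ agree on $Y'$, being saturated subbundles of $\mathcal{O}_{Y'}^{\oplus v}$ of the same rank with one contained in the other. Hence the two flat quotients $\mathcal{O}_Z^{\oplus q}$ (via $g$) and $\varphi^{*}Q'$ (via $\varphi^{*}u'$) of $\mathcal{O}_Z^{\oplus v}$ have the same restriction to $Y'$, so the uniqueness clause of Proposition \ref{propA} applied to $\delta\colon Z\to\operatorname{Spec}(R')$ forces $\varphi^{*}Q'\simeq\mathcal{O}_Z^{\oplus q}$.

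With $\varphi^{*}Q'$ free, faithfully flat descent along $\varphi$ gives that $Q'$ is locally free on $X$, and the isomorphism $\varphi^{*}Q'\simeq\mathcal{O}_Z^{\oplus q}$ records that $Q'$ is trivialized by $\varphi$; therefore $Q'\in\operatorname{Obj}(\mathcal{T})$, which is the claim. The two delicate points to keep honest are the flatness bookkeeping needed to regard $\varphi^{*}Q'$ as a flat quotient \emph{over} $R'$ (so that Proposition \ref{propA} is legitimately applied on $Z$, using the factorization $\varphi=\gamma\circ h$ through $X_{R'}$ from the discussion after Proposition \ref{nucleoseimagenes}), and the identification of the two kernels on $Y'$, which rests on both being saturated subbundles of the same rank of $\mathcal{O}_{Y'}^{\oplus v}$.
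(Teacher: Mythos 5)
Your proposal is correct, and its endgame coincides with the paper's: both proofs identify $\varphi^{*}Q'$ with a free sheaf by producing a second flat quotient of $\varphi^{*}V\simeq\mathcal{O}_Z^{\oplus v}$ --- obtained as $\delta^{*}$ of a surjection of free $R'$-modules --- which agrees with $\varphi^{*}u'$ on the generic fibre, then invoking the uniqueness clause of Proposition \ref{propA}, and finally descending along the faithfully flat $\varphi$. The difference lies in how the free $R'$-model is manufactured. The paper works with the surjection itself: it descends $\varphi_K^{*}u$, a map of free $\mathcal{O}_{Y'}$-modules, to a $K'$-linear map using Lemma \ref{plenfiel} (full faithfulness of pullback via the section), and then applies Proposition \ref{propA} a first time, over $\Spec(R')$, to extend that map to a surjection of free $R'$-modules. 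You work with the kernel instead: you take $N_0=H^{0}(Z,\varphi^{*}N)\subseteq R'^{\oplus v}$ and obtain the free quotient from a saturation argument over the DVR $R'$. Your route trades Lemma \ref{plenfiel} and one of the two applications of Proposition \ref{propA} for that saturation argument, so it is somewhat more hands-on and self-contained, while the paper's is lighter on torsion-freeness bookkeeping. One step you should tighten: ``comparison with the (saturated) generic kernel'' is not by itself a proof that $N_0$ is saturated; the honest argument is that $R'^{\oplus v}/N_0$ embeds, by left exactness of $\delta_{*}$, into $H^{0}(Z,\varphi^{*}Q')$, whose stalks (and hence whose global sections) are $R'$-torsion-free precisely because $Q'$ is flat over the base --- the hypothesis of the lemma --- and $\varphi$ is flat; this is exactly the ``flatness bookkeeping'' you defer to your closing remarks. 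Since the paper's own uniqueness step tacitly relies on the same flatness of $\varphi^{*}Q'$, this omission leaves your proof at the same level of rigor as the original, but it is the one place where the argument genuinely uses the flatness hypothesis on $Q'$ and should be spelled out.
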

\begin{proof}
We denote by $u^{\prime}: V \rightarrow Q^{\prime}$ the epimorphism whose restriction to the generic fibre gives $u: V_K \rightarrow Q$. Pulling back $u: V_K \to Q$ over $Y'$ we obtain (see figure \ref{figura1}) the quotient   $$(f_{K^{\prime}})^{*}\lambda^{*}u: (f_{K^{\prime}})^{*}\lambda^{*}V_K \to (f_{K^{\prime}})^{*}\lambda^{*}Q.$$ Being $V \in \text{Obj}(\mathcal{T}_K)$ then $\varphi^{*}V \simeq \mathcal{O}_Z^{\oplus v}$, and therefore $\varphi_K^{*}V_K = (\lambda \circ f_{K^{\prime}})^{*}V_K \simeq \mathcal{O}_{Y^{\prime}}^{\oplus v}$. Given that $Q$ is an object of $\mathcal{T}_2$ then we have $(f_{K^{\prime}})^{*}\lambda^{*}Q \simeq \sO_{Y^{\prime}}^{\oplus s}$ for some positive integer $s$.

Observe now that $\varphi_K^*u = (f_{K'})^*\lambda^*u$ is a morphism of free $\sO_{Y'}-$modules, then, by Lemma \ref{plenfiel} it comes from a morphism $\gamma$ of free $K'$-modules. Because of \ref{propA} there exists a unique morphism $\gamma'$ of  free $R'$-modules such that $\gamma'_K= \gamma$. Now take $\delta^*\gamma: \varphi^*V \to \sO_Z^{\oplus s}$. Its restriction to $K'$ equals the morphism $\varphi*_Ku$, but also the morphism $\varphi^*u$ accomplishes this. Again by \ref{propA} this implies that $\sO_Z^{\oplus s}= \varphi^*Q'$, thus $Q'$ is locally free and a member of $\sT$.
\end{proof}
We now analyze kernels in the category $\mathcal{T}$.
\begin{lemma}\label{propC}
Let us take $V \in \text{Obj}(\mathcal{T}_K)$, set $V_K:= j^{*}V$ and let $W \in \text{Obj}(\mathcal{T}_2)$ be a subsheaf of $V_K$ (through a monomorphism $ v:W \to V_K$). Then there exists a monomorphism $v':W^{\prime} \to V$ for some  $W^{\prime} \in \text{Obj}(\mathcal{T}_K)$ such that  $j^{*}W^{\prime} \simeq W$. 
\end{lemma}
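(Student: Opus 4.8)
The plan is to dualize the quotient statement of Lemma \ref{propB} by routing the subsheaf $W$ through its cokernel and then taking a kernel back inside $\mathcal{T}$; everything of substance is already contained in Lemma \ref{propB} and Proposition \ref{nucleoseimagenes}, and the rest is formal bookkeeping with the abelian category $\mathcal{T}_2$.

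First I would produce a quotient to which Lemma \ref{propB} applies. Since $V \in \text{Obj}(\mathcal{T}_K)=\text{Obj}(\mathcal{T})$, Remark \ref{T2abeliana} gives $V_K=j^{*}V \in \text{Obj}(\mathcal{T}_2)$. By Lemma \ref{T2igualT3} and Remark \ref{T2abeliana} the category $\mathcal{T}_2$ is abelian, so the cokernel $Q:=\text{Coker}(v)$ is again an object of $\mathcal{T}_2$, and I obtain an epimorphism $u: V_K \to Q$ with $\text{Ker}(u)=\text{Im}(v)\simeq W$, the last identification because $v$ is a monomorphism in an abelian category.

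Next I would apply Lemma \ref{propB} to the quotient $u: V_K \to Q$. It yields the unique $R'$-flat quotient $Q'$ of $V$ with $j^{*}Q'\simeq Q$, and asserts $Q' \in \text{Obj}(\mathcal{T})$; let $u': V \to Q'$ be the corresponding epimorphism of sheaves on $X$, whose restriction to the generic fibre is $u$. As $\mathcal{T}$ is a full subcategory of $\text{Coh}(X)$ and both $V,Q'\in \mathcal{T}$, the map $u'$ is a morphism in $\mathcal{T}$; by Proposition \ref{nucleoseimagenes} this category has kernels, so I may set $W':=\text{Ker}(u')\in \text{Obj}(\mathcal{T})=\text{Obj}(\mathcal{T}_K)$ together with its canonical monomorphism $v': W' \to V$. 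In particular $W'$ is locally free and trivialized by $\varphi$, and, as seen in the proof of Proposition \ref{nucleoseimagenes}, this kernel coincides with the kernel computed in $\text{Coh}(X)$.

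It then remains to identify the generic fibre of $W'$. Since $j: X_K \to X$ is flat, the functor $j^{*}$ is exact, whence $j^{*}W'=j^{*}\text{Ker}(u')=\text{Ker}(j^{*}u')=\text{Ker}(u)\simeq W$, compatibly with the inclusions into $V_K$. Thus $v': W' \to V$ is the desired monomorphism with $j^{*}W'\simeq W$, completing the argument. The only delicate point, which is the step I would watch most carefully, is to ensure that the kernel taken inside $\mathcal{T}$ genuinely agrees with the honest sheaf-theoretic kernel, so that exactness of $j^{*}$ transports it back to $W$; this is exactly what the proof of Proposition \ref{nucleoseimagenes} guarantees.
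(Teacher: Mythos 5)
Your proposal is correct and follows essentially the same route as the paper: form the cokernel $u\colon V_K \to Q$ in the abelian category $\mathcal{T}_2$, extend it to $u'\colon V \to Q'$ via Lemma \ref{propB}, and take $W' := \mathrm{Ker}(u')$. The only (harmless) differences are that you invoke Proposition \ref{nucleoseimagenes} wholesale to place $W'$ in $\mathcal{T}$, where the paper re-runs the $\varphi^{*}$-exactness and free-kernel argument directly, and that you spell out the identification $j^{*}W' \simeq \mathrm{Ker}(u) \simeq W$, which the paper leaves implicit.
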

\begin{proof}
Consider the cokernel $u: V_K \rightarrow Q$ of $v$; as $V_K$ belongs to $\mathcal{T}_2$ and this category is abelian then $Q$ is an object of $\mathcal{T}_2$ too. Following Lemma \ref{propB} we construct the quotient $u^{\prime}: V \rightarrow Q^{\prime}$ which gives $u$ when restricted to the generic fibre, where  $Q^{\prime} \in \text{Obj}(\mathcal{T}_K)$. Set  $W^{\prime}:= \text{Ker}(u^{\prime})$, by applying $\varphi^{*}$ to the exact sequence
\begin{align*}
\xymatrix{
W^{\prime} \ar@{^{(}->}[r] & V \ar@{->>}[r]^{u^{\prime}} & Q^{\prime}
},
\end{align*}
we obtain another exact sequence
\begin{align*}
\xymatrix{
\varphi^{*}W^{\prime} \ar@{^{(}->}[r] & \varphi^{*}V \simeq \mathcal{O}_Z^{\oplus v} \ar@{->>}[r]^{\varphi^{*}u^{\prime}} & \varphi^{*}Q^{\prime} \simeq \mathcal{O}_Z^{\oplus q^{\prime}}
},
\end{align*}
in which $\varphi^{*}W^{\prime} \simeq \text{Ker}(\varphi^{*}u^{\prime})$, because of the exactness of $\varphi^{*}$. As mentioned immediately after proposition \ref{nucleoseimagenes}, kernels of maps between free $\mathcal{O}_Z$-modules are still free, hence $W^{\prime}$  is trivialized by $\varphi$, then, again by faithfully flat descent,  $W^{\prime}$ is locally free; moreover $W^{\prime} \in \text{Obj}(\mathcal{T}_K)$.
\end{proof}

\begin{proof}[Proof of the proposition  \ref{Tkabeliana}]
Let us consider the functor $F: \mathcal{T}_K \rightarrow \mathcal{T}_2$ given by
$$F(V):= j^{*}V,$$
for $V \in \text{Obj}(\mathcal{T}_K)=\text{Obj}(\mathcal{T})$. We will show that this functor is an equivalence of categories, and consequently because of Lemma \ref{T2igualT3} and Remark \ref{T2abeliana} we will obtain that $\mathcal{T}_K$ is abelian. The fact that $F$ is fully faithful follows clearly from the isomorphism
\begin{align*}
\text{Hom}_{\mathcal{T}_K}(V,W) := \text{Hom}_{\mathcal{T}}(V,W) \otimes_R K \simeq \text{Hom}_{\mathcal{T}_2}(j^{*}V,j^{*}W).
\end{align*} 
So we prove now that $F$ is essentially surjective. For this set $B:= f_{*}\mathcal{O}_Y$. Using the commutativity of direct image with flat base change we see that
$$
\lambda^{*}B= \lambda^{*}f_{*}\mathcal{O}_Y \simeq (f_{K'})_{*}\alpha^{*}\mathcal{O}_Y = (f_{K'})_{*}\mathcal{O}_{Y'},
$$
and therefore,
$$
\lambda_*\lambda^*B \simeq \lambda_*(f_{K'})_*\mathcal{O}_{Y'} =(\varphi_{K})_*\mathcal{O}_{Y'},
$$
the latter coming from the equality $\varphi_K = \lambda \circ f_{K'}$. Again using commutativity of direct image with flat base change we obtain that $(\varphi_{K})_*\mathcal{O}_{Y'} \simeq j^{*} \varphi_{*}\mathcal{O}_Z$. The natural immersion $B \to \lambda_*\lambda^*B$ thus gives us an immersion
$$
B \to j^{*}\varphi_{*}\mathcal{O}_Z.
$$
A hypothesis of Theorem \ref{teoGENERAL} was that $\varphi^* \varphi_*\sO_Z$ is a free $\sO_Z$-module, or in other words that $V:= \varphi_*\mathcal{O}_Z$ is an object of $\mathcal{T}$, in this case we have an immersion $B \to V_K$, where $V_K$ is an object of $\mathcal{T}_2$. By means of Lemma  \ref{propC}, there exists $W^{\prime} \in \text{Obj}(\mathcal{T}_K)$ such that $j^{*}W^{\prime} \simeq B$, consequently we have proven the essential surjectivity at the object $B$. By taking duals also we count with essential surjectivity in the dual $B^{\vee}$, and in finite direct sums of $B$ and $B^{\vee}$. Recalling that $\mathcal{T}_2= \mathcal{T}_3$, and this last category is spanned by the object $B$, so the only thing we are left to verify is that essential surjectivity is preserved through quotients and subquotients, however this is a consequence of Lemmas \ref{propB} and \ref{propC}.
\end{proof}

\begin{proposition}[Construction of the fibre functor $\omega$ for $\mathcal{T}$]\label{contruccionomega}
We set $\omega:= x^{*}$. Then $\omega$ is a $R$-linear additive functor
\begin{align*}
\omega: \mathcal{T} \rightarrow \text{Mod}(R)
\end{align*}
such that
\begin{itemize}
\item $\omega$ is faithful and preserves kernels and images.
\item $\omega$ restricted to $\mathcal{T}^{\text{triv}}$ is faithfully flat.
\end{itemize}
\end{proposition}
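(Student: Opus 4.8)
The plan is to verify the stated properties of $\omega=x^{*}$ one at a time, after first recording a geometric identity linking the section $x$ with the section $t\colon\Spec(R')\to Z$ produced in the proof of Proposition \ref{nucleoseimagenes}. Since $x\in X(R)$ and every $V\in\mathcal{T}$ is locally free, $x^{*}V$ is finite locally free over $R$, hence (as $R$ is local) finite free; so $\omega$ does land in $\text{Mod}(R)$, and it is $R$-linear and additive because it is pullback along a morphism over $R$. The key preliminary observation is the identity $\varphi\circ t=x\circ\mu$, where $\mu\colon\Spec(R')\to\Spec(R)$ is the finite map from the Stein factorization. Indeed, both sides are morphisms $\Spec(R')\to X$ lying over $\mu$ (one computes $p\circ\varphi\circ t=\mu\circ\delta\circ t=\mu$ and $p\circ x\circ\mu=\mu$), and on the generic point $\Spec(K')$ they both reduce to the $K'$-point determined by $y$ over $x_K$, since $t$ extends $y_{K'}$ and $\varphi_K=\lambda\circ f_{K'}$. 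As $\Spec(R')$ is integral with dense generic point and $X$ is separated over $R$, two such morphisms agreeing generically must coincide. Passing to pullbacks gives the functorial identity $\mu^{*}\circ x^{*}=t^{*}\circ\varphi^{*}$, which drives the first two properties.

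Faithfulness and the preservation of kernels and images then follow by faithfully flat descent along $\mu$. Given $\phi\colon V\to W$ in $\mathcal{T}$, the map $\varphi^{*}\phi$ is a morphism of free $\mathcal{O}_Z$-modules whose kernel and image are again free (as recorded just after Proposition \ref{nucleoseimagenes}); by Remark \ref{consLem2.4} the functor $t^{*}$ is exact on such maps and $\varphi^{*}$ is exact, so $t^{*}\varphi^{*}=\mu^{*}x^{*}$ preserves kernels and images and is faithful. Since $\mu^{*}=-\otimes_R R'$ is faithfully flat, it reflects these statements down to $x^{*}$: the natural comparison maps $x^{*}\text{Ker}(\phi)\to\text{Ker}(x^{*}\phi)$ and $x^{*}\text{Im}(\phi)\to\text{Im}(x^{*}\phi)$ become isomorphisms after $-\otimes_R R'$, hence are isomorphisms; and $x^{*}\phi=0$ forces $t^{*}\varphi^{*}\phi=\mu^{*}x^{*}\phi=0$, whence $\varphi^{*}\phi=0$ (faithfulness of $t^{*}$ on free modules, from the proof of Lemma \ref{plenfiel}) and finally $\phi=0$.

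For the last property the plan is to show that every object of $\mathcal{T}^{\text{triv}}$ is in fact a \emph{free} $\mathcal{O}_X$-module, and then to invoke Lemma \ref{plenfiel}. Let $V\hookrightarrow\mathcal{O}_X^{\oplus n}$ be trivial of rank $v$. Restricting to the generic fibre, $V_K=j^{*}V$ is a subobject of the unit object in the neutral Tannakian category $\mathcal{T}_2=\mathcal{T}_3$ (Lemma \ref{T2igualT3}, Remark \ref{T2abeliana}); over a field a subobject of a trivial object is trivial, so $V_K\simeq\mathcal{O}_{X_K}^{\oplus v}$ and $\Gamma(X_K,V_K)\simeq K^{\oplus v}$. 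Hence $\Gamma(X,V)$ is a finite torsion-free, thus free, $R$-module of rank $v$, and a basis yields an evaluation map $\beta\colon\mathcal{O}_X^{\oplus v}\to V$ that is an isomorphism on the generic fibre and injective. On the special fibre $V_s$ is essentially finite (Proposition \ref{tonini}) with $\dim_k\Gamma(X_s,V_s)=v$ (it is $\ge v$ because global sections specialize injectively, and $\le v$ by Proposition \ref{leizhang}, using $\mathcal{O}_{X_s}(X_s)=k$ and the rational point $x_s$); the remark after Proposition \ref{leizhang} then gives $V_s\simeq\mathcal{O}_{X_s}^{\oplus v}$ and shows $\beta_s$ is an isomorphism. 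Nakayama applied to $\operatorname{coker}(\beta)$, which is coherent, supported on $X_s$, and satisfies $\operatorname{coker}(\beta)\otimes_R k=\operatorname{coker}(\beta_s)=0$, forces $\operatorname{coker}(\beta)=0$, so $\beta$ is an isomorphism and $V\simeq\mathcal{O}_X^{\oplus v}$. With $V,W\in\mathcal{T}^{\text{triv}}$ now free, the intermediate statement in the proof of Lemma \ref{plenfiel}, applied to $p\colon X\to\Spec(R)$ with its section $x$, says exactly that $x^{*}\colon\text{Hom}_{\mathcal{O}_X}(\mathcal{O}_X^{\oplus v},\mathcal{O}_X^{\oplus w})\to\text{Hom}_R(R^{\oplus v},R^{\oplus w})$ is an isomorphism, i.e. $\omega$ is fully faithful on $\mathcal{T}^{\text{triv}}$.

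The main obstacle is precisely the freeness of trivial objects: the generic fibre only delivers triviality of $V_K$, and one must genuinely descend this across the special fibre, which is where the Tannakian input on $X_s$ (Proposition \ref{leizhang}) and the completeness of $R$ enter, the latter guaranteeing that $R'$ is again a DVR and that Nakayama applies to the coherent cokernel. The identity $\varphi\circ t=x\circ\mu$ is the other delicate point, as it is what transfers the good behaviour of $t^{*}$ on free $\mathcal{O}_Z$-modules to the fibre functor $x^{*}$. (We note that the second bullet in the statement, printed as ``faithfully flat'', should read ``fully faithful'', in accordance with the definition of a Tannakian lattice.)
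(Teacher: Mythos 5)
Your proof is correct and follows the paper's skeleton: the identity $x\circ\mu=\varphi\circ t$, faithfully flat descent along $\mu$, the good behaviour of $t^{*}\varphi^{*}$ on free $\mathcal{O}_Z$-modules (Lemma \ref{plenfiel}, Remark \ref{consLem2.4}) for kernels and images, and, for $\mathcal{T}^{\mathrm{triv}}$, the reduction to showing that trivial objects are actually free (via Proposition \ref{tonini}, Proposition \ref{leizhang} and its remark, then Lemma \ref{plenfiel} applied to $p$ with its section $x$). You differ in three sub-steps, all legitimate. First, the paper proves faithfulness by restricting to the generic fibre and using faithfulness of the fibre functor $x_K^{*}$ of $\mathrm{EF}(X_K)$; you route it instead through $t^{*}\varphi^{*}$ and descent, which works because $t^{*}$ is faithful on free modules and $\varphi^{*}$ is faithful. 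Second, where the paper invokes semicontinuity to get $\dim_k V_s(X_s)\geq v$ and Grauert's theorem both to obtain $p_{*}V\simeq\mathcal{O}_R^{\oplus v}$ and to identify the special fibre of the comparison map, you use elementary module theory: $\Gamma(X,V)$ is finite torsion-free, hence free of rank $v$ by flat base change; sections specialize injectively through $0\to V\xrightarrow{\pi}V\to V_s\to 0$; and surjectivity of $\beta$ follows from Nakayama applied to its cokernel, which is coherent and supported on $X_s$. This trades cohomological machinery for simpler arguments and is, if anything, tighter. Third, you actually prove the identity $x\circ\mu=\varphi\circ t$ (agreement over $\mu$ and at the generic point, then separatedness of $X$ plus reducedness of $\Spec(R')$), which the paper dismisses with ``Clearly''; writing this down is worthwhile, since it is precisely what transfers the properties of $t^{*}$ to $x^{*}$. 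You are also right that ``faithfully flat'' in the second bullet is a typo for ``fully faithful'': that is what the definition of a Tannakian lattice requires and what both the paper's proof and yours establish.
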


\begin{proof}
Consider the commutative diagram of functors
\begin{align*}
\xymatrix{
\mathcal{T} \ar[r]^{\omega} \ar[d]_{j^{*}} & \text{Mod}(R) \ar[d]^{- \otimes_R K} \\
\text{EF}(X_K) \ar[r]^{x_K^{*}} & \text{Vectf}_K
}
\end{align*}
Then $\omega= x^{*}$ is faithful as $j^{*}$ and $x_K^{*}$ are (indeed,  $x_K^{*}$ is the fibre functor of the category $\text{EF}(X_K)$, which is known to be faithful).

Now we show that $\omega$ preserves kernels and images: consider the map $\mu: \text{Spec}(R^{\prime}) \rightarrow \text{Spec}(R)$, which is a faithfully flat morphism (as $R'$ is a free $R$-module of finite rank). If we prove that $\mu^{*}x^{*} = (x \circ \mu)^{*}$ preserves kernels and images, then the same property will follow for $x^{*}$ because $\mu$ is faithfully flat. To see this let $u: V \rightarrow W$ be a morphism in $\mathcal{T}$, and let us consider its image factorization
\begin{align*}
\xymatrix{
V \ar@{->>}[r] &\text{Im}(u) \ar@{^{(}->}[r]& W,}
\end{align*}
if this is preserved by $\mu^{*}x^{*}$ then we have a factorization
\begin{align*}
\xymatrix{
\mu^{*}x^{*}V \ar@{->>}[r] &\mu^{*}x^{*}\text{Im}(u)=\text{Im}(\mu^{*}x^{*}u) \ar@{^{(}->}[r]& \mu^{*}x^{*}W,}
\end{align*}
and the fact that $\mu$ is faithfully flat induces the factorization 
\begin{align*}
\xymatrix{x^{*}V \ar@{->>}[r] &x^{*}\text{Im}(u)=\text{Im}(x^{*}u) \ar@{^{(}->}[r]& x^{*}W.}
\end{align*}
Hence $x^{*}$ preserves the image of $u$. A similar argument can be done with the kernel of $u$. Then we proceed to show that $(x \circ \mu)^{*}$ preserves kernels and images. Recall that in page 6 we used the valuative criterion of properness to conclude the existence of a section $t: \text{Spec}(R^{\prime}) \rightarrow Z$ of the morphism $\delta: Z \to \text{Spec}(R')$. Clearly $x \circ \mu = \varphi \circ t$, and therefore  $(x \circ \mu)^{*}= ( \varphi \circ t)^{*} = t^{*}\varphi^{*}$. Now let $u : V \rightarrow W$ be again a morphism of objects in $\mathcal{T}$ with image factorization
\begin{align*}
\xymatrix{
V \ar@{->>}[r] &\text{Im}(u) \ar@{^{(}->}[r]& W.}
\end{align*}
Now, $\varphi$ being a (faithfully) flat morphism, this factorization is preserved by $\varphi^{*}$, and therefore we have the factorization
\begin{align*}
\xymatrix{
\varphi^{*}V \ar@{->>}[r] &\text{Im}(\varphi^{*}u)=\varphi^{*}\text{Im}(u) \ar@{^{(}->}[r]& \varphi^{*}W,}
\end{align*}
However, as $V,W$ are objects of $\mathcal{T}$, then $\varphi^*V$ and $\varphi^*W$ are free $\mathcal{O}_Z$ modules, then so are its kernel and image, as in Proposition \ref{nucleoseimagenes}. Hence previous sequence is actually
\begin{align*}
\xymatrix{
\mathcal{O}_Z^{\oplus v} \ar@{->>}[r] &\mathcal{O}_Z^{\oplus s} \ar@{^{(}->}[r]& \mathcal{O}_Z^{\oplus w}.}
\end{align*}
Now by virtue of Remark \ref{consLem2.4} the previous sequence is sent by $t^{*}$ into
\begin{align*}
\xymatrix{
\mathcal{O}_{R^{\prime}}^{\oplus v} \ar@{->>}[r] &t^{*}\varphi^{*}(\text{Im}(u))=\mathcal{O}_{R^{\prime}}^{\oplus s} \ar@{^{(}->}[r]& \mathcal{O}_{R^{\prime}}^{\oplus w},}
\end{align*}
In conclusion the functor $t^{*}\varphi^{*}$ preserves images, and in a similar fashion it can be proven that it preserves kernels as well.

Now we prove that the restriction of $\omega$ to $\mathcal{T}^{\text{triv}}$ is fully faithful. The objects in $\mathcal{T}^{\text{triv}}$ are by definition those sheaves $V \in \text{Obj}(\mathcal{T})$ with a monomorphism $V \rightarrow \mathcal{O}_X^{\oplus n}$ for some $n \in \mathbb{N}$. By the proof of Lemma \ref{plenfiel} we already know that $x^{*}$- with $x$ being a section of $p: X \rightarrow \text{Spec}(R)$- is fully faithful at the free $\mathcal{O}_X$-modules, as $\mathcal{O}_X(X)=R$. So it is sufficient (and indeed necessary) to prove that the objects of $\mathcal{T}^{\text{triv}}$  coincide with the free $\mathcal{O}_X$-modules, and this will clearly provide us with the desired result.

Take  an object $V \in \text{Obj}(\mathcal{T})$ with a monomorphism $V \rightarrow \mathcal{O}_X^{\oplus n}$ for some $n \in \mathbb{N}$. Applying the functor $j^{*}$ and its exactness we obtain a monomorphism  $V_K \rightarrow \mathcal{O}_{X_K}^{\oplus n}$ in  $\text{EF}(X_K)$, where sub-objects of free modules are free as well so $V_K \simeq \mathcal{O}_{X_K}^{\oplus v}$. We can reduce to the case where $n=v$. Indeed, consider that the inclusion $V_K \to \sO_{X_K}^{\oplus n}$ has $\sO_{X_K}^{\oplus n-v}$ as its quotient (because $V_K \simeq \sO_{X_K}^{\oplus v}$), then by \ref{propA} the quotient of the inclusion $V \to \sO_X^{\oplus n}$ must be  equal to $\sO_X^{\oplus n-v}$, this gives us that the morphism $V\to \sO_X^{\oplus n} \to \sO_X^{\oplus n-v}$ equals zero. As $\sO_X^{\oplus v}$ is the kernel of the projection $\sO_X^{\oplus n}\to \sO_X^{\oplus n-v}$ then, by the definition of kernel, we must have an immersion $V \to \sO_X^{\oplus v}$. \\
Let $j_s: X_s \rightarrow X$ be the natural closed immersion induced by the special fibre of $X$, and $\varphi_s: Z_s \rightarrow X_s$ the restriction of $\varphi:Z \rightarrow X$ over the special fibre, and set $V_s:= j_s^{*}V$. As $V$ is by definition trivialized by $\varphi$ then we also have $\varphi_s^{*}V_s \simeq \mathcal{O}_{Z_s}^{\oplus v}.$
By virtue of Proposition \ref{tonini} that means that $V_s \in \text{Obj}( \text{EF}(X_s))$. By semicontinuity (see for instance \cite[III, Theorem 12.8]{Har77}) we have the following inequality
\begin{align*}
\text{dim}_k (V_s(X_s)) \geq \text{dim}_K (V_K(X_K))=v.
\end{align*}
However $V_s$ being essentially finite, Proposition \ref{leizhang} tells that we cannot have the strict inequality  $\text{dim}_k (V_s(X_s)) >v$, consequently $\text{dim}_k (V_s(X_s))=v$, and then by loc. cit. $V_s \simeq \mathcal{O}_{X_s}^{\oplus v}$. The equality
\begin{align*}
\text{dim}_k (V_s(X_s))= \text{dim}_K (V_K(X_K))=v
\end{align*}
implies, by Grauert's theorem  \cite[III, Corollary 12.9]{Har77}, that $p_{*}V$ is locally free over $\text{Spec}(R)$, then free, and generically isomorphic to $(p_K)_{*}V_K \simeq (p_K)_{*}\mathcal{O}_{X_K}^{\oplus v} \simeq \mathcal{O}_K^{\oplus v}$; then necessarily
$p_{*}V \simeq \mathcal{O}_R^{\oplus v}.$ Applying $p^{*}$ to both sides of the last isomorphism we obtain  $p^{*}p_{*}V \simeq \mathcal{O}_X^{\oplus v}$. Considering the canonical morphism $p^{*}(p_{*}V) \rightarrow V$, we obtain a morphism
\begin{align*}
\rho: \mathcal{O}_X^{\oplus v} \rightarrow V
\end{align*}
between $\mathcal{O}_X$-modules of the same rank which is generically an isomorphism. Then in particular it is injective. So it is sufficient to prove that it is surjective in order to obtain that $\rho$ is an isomorphism, which would conclude the proof.  
Again by Grauert's theorem the direct image base changes correctly, then the restriction of $\rho$ to the special fibre equals the natural morphism
\begin{align*}
(p_s)^{*}(p_s)_{*}V_s \rightarrow V_s,
\end{align*}
which is known to be an isomorphism when $V_s$ is a free sheaf of $\mathcal{O}_{X_s}$-modules, as in our case. So both fibres of $\rho$ are surjective, and that implies that $\rho$ itself is surjective (this can be better seen on the stalks).
\end{proof}

\section{Proof of Theorem \ref{teoGENERAL}}
\label{sec:proof}
In this section we provide the construction of the torsor $f_1: Y_1 \rightarrow X$, as claimed in Theorem \ref{teoGENERAL} or, more generally, a torsor over $X$ whenever we have a tannakian lattice $\mathcal{T}\simeq \Rep^0_R(M)$ over $X$. This problem has been faced in \cite[\S 2.4]{DH18} for $X$ an affine $R$-scheme and in \cite{DSH19} whenever the affine and $R$-flat group scheme $M$ associated to $\mathcal{T}$ is finite. In the latter it is simply suggested to follow Nori's construction presented in \cite[\S 2]{Nori76}. Our approach is similar to the second one just mentioned, but we are not assuming neither $X$ to be affine nor $M$ to be finite. In order to generalize Nori's \cite[Lemma 2.1]{Nori76} we need the following proposition, which is a particular case of a result due to Wedhorn in a much general setting.

\begin{proposition}\label{propWed}
Let $R$ be a Dedekind ring and $B$ a $R$-coalgebra. Let moreover $L$ be a $R$-flat $B$-comodule. Then $L$ is the directed limit of $B$-comodules which are finitely generated and projective over $R$ (then flat). 
\end{proposition}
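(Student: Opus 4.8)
The plan is to combine two ingredients: the local finiteness of comodules (the ``fundamental theorem of comodules'', which is precisely Wedhorn's result in the general setting alluded to in the statement) together with the elementary structure theory of finitely generated modules over a Dedekind domain. First I would record that $R$, being a Dedekind ring with fraction field $K$, is in particular an integral domain, and that since $L$ is $R$-flat it is necessarily torsion-free: tensoring the injection $R \xrightarrow{r} R$ (for any $0 \neq r \in R$, which is injective because $R$ is a domain) with the flat module $L$ shows that multiplication by $r$ is injective on $L$.

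Next I would invoke the local finiteness of comodules to write $L$ as the directed union of its subcomodules $N_i \subseteq L$ that are finitely generated as $R$-modules; the coaction of $B$ restricts to each such $N_i$, and the family $\{N_i\}$ is directed under inclusion because the sum $N_i + N_j$ of two subcomodules finitely generated over $R$ is again a subcomodule finitely generated over $R$. Since $\bigcup_i N_i = L$, we obtain $L = \varinjlim_i N_i$ as $B$-comodules.

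It then remains to check that each $N_i$ is projective (hence flat) over $R$. As a submodule of the torsion-free module $L$, each $N_i$ is itself torsion-free; being moreover finitely generated over the Dedekind domain $R$, it is projective by the structure theorem for finitely generated modules over a Dedekind domain (finitely generated and torsion-free implies projective). Thus every $N_i$ is a $B$-comodule which is finitely generated and projective over $R$, and $L$ is their directed limit, which is exactly the assertion.

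I expect the only genuinely nontrivial input to be the local finiteness statement, namely that $L$ is exhausted by subcomodules finitely generated over $R$; this is precisely where Wedhorn's general theorem enters, and over a ring (rather than a field) it is considerably more delicate than the classical argument that chooses the relevant matrix coefficients in $B$ to be $R$-linearly independent. Once this is granted, the passage from finitely generated to finitely generated \emph{projective} is immediate from the flatness of $L$ together with the hypothesis that $R$ is Dedekind, and requires no further work.
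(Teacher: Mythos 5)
Your proof is correct and is in substance the same as the paper's: the paper's entire proof is the one-line citation to \cite[Corollary 5.10]{Wed04}, and your argument defers exactly the same key ingredient---local finiteness of the flat comodule $L$ over the Dedekind base---to Wedhorn, while correctly filling in the routine surrounding steps (flat over a domain implies torsion-free, the finitely generated subcomodules form a directed family under sums, and finitely generated torsion-free over a Dedekind ring implies projective). One point of emphasis: the local finiteness you invoke is not a general property of comodules over an arbitrary base ring, so the hypotheses that $L$ is $R$-flat and that $R$ is Dedekind are needed already for that step, not only for the final upgrade from finitely generated to projective---which is consistent with your own closing remark that this is the genuinely delicate input.
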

\begin{proof}This is \cite[Corollary 5.10]{Wed04}.
\end{proof}

Now exactly like in the field case (as done in \cite[p. 31-32 Section 2]{Nori76}) we can build $\Rep^0_R(M)'$, the category of all $R$-linear (left) representations of $M$ which are directed limit of representations whose subjected module is finitely generated and projective. Then on a similar fashion, from the functor $F:\Rep^0_R(M)\simeq \mathcal{T}\to \text{Qcoh}(X)$ we obtain a new functor $F':\Rep^0_R(M)' \to \text{Qcoh}(X)$. In order to obtain a torsor over $X$, as in the field case, it is sufficient to consider $\mathbf{Spec}(F'(R[M]))$. The latter makes sense as $R[M]$ belongs to $\Rep^0_R(M)'$ by Proposition \ref{propWed} and it has of course an algebra structure. The details are left to the reader.

Therefore in our particular case the equivalence $F: \Rep_R(M) \simeq \mathcal{T}$ provides us with a $M$-torsor $f_1:Y_1 \to X$, where $Y_1:=\mathbf{Spec}(F'(R[M]))$. Remember that by scalar extension we obtain the category $\mathcal{T}_K$ for which we have proved, in section \ref{sec2.2}, the equivalence $\mathcal{T}_K \simeq \text{EF}(X_K, \{ f_{*}\mathcal{O}_Y \})\simeq \Rep_K(G)$ as $f:Y\to X$ is the given $G$-torsor. Then in particular $M_K\simeq G$ (as $(\Rep^0_R(M))_K \simeq  \Rep_K(M_K)$ by \cite[(6.4)]{Wed04}). What is left to verify is that the restriction of the $M$-torsor $f_1:Y_1 \to X$ to the generic fiber $X_K$ of $X$ is the original $G$-torsor $f: Y \to X_K$. This is essentially a consequence of \cite[Proposition 2.9]{Nori76} (mutatis mutandis). Given the equivalence $\Rep_R(M)\simeq \sT$, then the group scheme $M$ is affine because it is the group scheme associated to a tannakian lattice ($\sT$), the proof of this fact can be found in \cite[\S 2.3.1]{DH18}. It is also quasi-finite because its generic fiber $M_K \simeq G$ is finite (by hypothesis). This concludes the proof of Theorem \ref{teoGENERAL}.

\section{Application: prime to $p$ torsors}\label{sez:PrimetoP}

In what follows for any pro-finite group scheme $G$, by $G^{(p')}$ we will denote the biggest prime to $p$ quotient of $G$, that is the projective limit of all those finite quotients of $G$ whose order is prime to $p$.  We recall one of the most important consequences of the already mentioned ``th\'eorie de la sp\'ecialisation'' (cf. \cite{SGA}, Chapitre X, Th\'eor\`eme 3.8 and Corollaire 3.9) for the \'etale fundamental group here denoted by $\pi^{\text{\'et}}$:

\begin{theorem}\label{teoSPECIAL}
Let $f:X\to Y$ be a proper and smooth morphism of schemes, with geometrically connected fibres, with $Y$ locally noetherian, $s,\eta\in Y$ such that $s\in \overline{\{\eta\}}$, the latter denoting the Zariski closure of $\eta$ in $X$. We set $X_{\overline{s}}$ and $X_{\overline{{\eta}}}$ the respective geometric fibres. Then the specialization morphism $$sp:\pi^{\text{\'et}}(X_{\overline{{\eta}}},\overline{x}_1)\to \pi^{\text{\'et}}(X_{\overline{{s}}},\overline{x}_0)$$ (where $\overline{x}_1\in X_{\overline{{\eta}}}$ and $\overline{x}_0\in X_{\overline{{s}}}$ are geometric points) is surjective and:
\begin{itemize}
    \item if $char(k(s))=0$ then $sp$ is an isomorphism;
    \item if $char(k(s))=p>0$ then $$sp^{(p')}:\pi^{\text{\'et}}(X_{\overline{{\eta}}},\overline{x}_1)^{(p')}\to \pi^{\text{\'et}}(X_{\overline{{s}}},\overline{x}_0)^{(p')}$$ is an isomorphism.
\end{itemize}
\end{theorem}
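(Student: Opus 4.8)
The plan is to recast everything in Grothendieck's Galois-category formalism and then reduce to a decisive local case. Writing $\pi^{\text{\'et}}(-,\ast)$ as the automorphism group of the fibre functor on the category of finite étale covers, a homomorphism of fundamental groups is surjective exactly when the associated pullback functor between cover categories is fully faithful, and it is an isomorphism exactly when that functor is an equivalence; the prime-to-$p$ version corresponds to restricting to the full subcategories of covers of degree prime to $p$ (equivalently, to tame covers of the special fibre). First I would replace $Y$ by the spectrum of the strict henselization, or the completion, of the local ring $\mathcal{O}_{Y,s}$, reducing to a proper smooth family $X \to \Spec(A)$ over a complete strictly henselian Noetherian local ring $A$ with closed point $s$ and generization $\eta$; choosing a chain of specializations from $\eta$ to $s$ then reduces the general base to the case where $A$ is a complete discrete valuation ring (a trait), which is the essential case.

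For the surjectivity of $sp$ I would build the specialization functor by lifting covers off the special fibre. The étale site is invariant under nilpotent thickenings, so every finite étale cover of $X_s$ lifts uniquely and compatibly to the infinitesimal neighbourhoods $X_n := X \times_A A/\mathfrak{m}^{n+1}$, giving an equivalence between covers of $X_s$ and covers of the formal completion; Grothendieck's formal existence theorem then algebraizes this compatible system to an honest finite étale cover of $X$ over $A$. Restricting to the generic fibre $X_\eta$ and then to the geometric generic fibre $X_{\bar\eta}$ produces the specialization functor from covers of $X_{\bar s}$ to covers of $X_{\bar\eta}$. This functor is fully faithful: the lifting and algebraization steps are equivalences of categories, and restriction of a finite étale cover of the normal total space $X$ to its dense generic fibre is fully faithful, since the covers are themselves normal and a morphism over the dense open $X_\eta$ extends uniquely by normality and separatedness. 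Full faithfulness of the specialization functor yields precisely the surjectivity of $sp$.

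The main obstacle is the reverse direction, namely essential surjectivity of the specialization functor onto all covers when $\mathrm{char}(k(s))=0$, and onto the tame, prime-to-$p$ covers when $\mathrm{char}(k(s))=p>0$; this is equivalent to injectivity of $sp$, respectively of $sp^{(p')}$. Here I would argue that a finite étale cover of $X_{\bar\eta}$ extends, after a finite base change on the trait, to a cover of the whole family $X$ that a priori ramifies along the special fibre $X_s$. This ramification is tame exactly when the degree is prime to $p$, and by Abhyankar's lemma a suitable tamely ramified base change $A \to A'$ resolves it, producing a genuinely finite étale cover of $X \times_A A'$. Passing to the geometric generic fibre absorbs the base change $\Spec(A') \to \Spec(A)$, so the original cover of $X_{\bar\eta}$ is recovered by specialization from a cover of $X_{\bar s}$, giving the essential surjectivity and hence the injectivity. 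The crux, and the reason the prime-to-$p$ hypothesis cannot be removed in characteristic $p$, is exactly this control of ramification: Abhyankar's lemma tames the prime-to-$p$ part, whereas the wild $p$-power part cannot be resolved in this way, and indeed $sp$ genuinely fails to be injective on the pro-$p$ part.

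In residue characteristic $0$ I would, as an independent check and a cleaner route to the isomorphism, invoke the comparison with the topological fundamental group: by spreading out and the Lefschetz principle one reduces to a smooth proper family over $\bC$, where Ehresmann's fibration theorem shows the family is topologically locally trivial, so its fibres have isomorphic topological fundamental groups, while the Riemann existence theorem identifies the étale fundamental group with the profinite completion of the topological one. This furnishes the isomorphism $sp$ in characteristic $0$ directly and confirms the essential-surjectivity step that is the heart of the prime-to-$p$ statement.
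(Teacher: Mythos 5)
First, a point of comparison: the paper does not prove Theorem \ref{teoSPECIAL} at all. It is recalled as a classical result, with a citation to SGA~1 (Expos\'e X, Th\'eor\`eme 3.8 and Corollaire 3.9), and serves only as an input (through Theorem \ref{teoSPECIAL1}) to what the paper actually proves. So your proposal must be judged as a reconstruction of the classical argument, and in outline it does follow the standard route: reduction to a complete trait, the equivalence between covers of the special fibre and covers of the total space (invariance of the \'etale site under infinitesimal thickenings plus Grothendieck's existence theorem), and Abhyankar's lemma to remove tame ramification along the special fibre for the prime-to-$p$ injectivity.

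However, your surjectivity argument has a genuine gap, and it sits exactly where the hypothesis of \emph{geometrically connected} fibres must be used --- a hypothesis your sketch never invokes. You claim the specialization functor from covers of $X_{\overline{s}}$ to covers of $X_{\overline{\eta}}$ is fully faithful because the lifting/algebraization steps are equivalences and because restriction from covers of $X$ to covers of $X_{\eta}$ is fully faithful (by normality of the total space). But the specialization functor also factors through the base-change functor from covers of $X_{\eta}$ to covers of $X_{\overline{\eta}}$, and that functor is \emph{not} fully faithful: full faithfulness there would mean $\pi^{\text{\'et}}(X_{\overline{\eta}})\to \pi^{\text{\'et}}(X_{\eta})$ is surjective, which fails whenever $K$ is not separably closed (a constant cover $X_{\eta}\times_K L$, with $L/K$ finite separable nontrivial, splits over $X_{\overline{\eta}}$, and the split cover has strictly more endomorphisms). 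So your chain of functors only proves that $\pi^{\text{\'et}}(X_{\eta})\to\pi^{\text{\'et}}(X)$ is surjective, not that $sp$ is. The missing step is to show that a connected finite \'etale cover $Y'\to X$ has geometrically connected generic fibre; this is where properness and geometric connectedness of the fibres of $X$ enter, via a Stein-factorization type argument: one proves $H^0(Y',\mathcal{O}_{Y'})=A$ (for instance, the special fibre of $Y'$ is a connected proper scheme over the algebraically closed residue field $k$, so its ring of global sections is $k$; then use flatness and cohomology and base change), whence $H^0(Y'_{\overline{\eta}},\mathcal{O}_{Y'_{\overline{\eta}}})=\overline{K}$ and $Y'_{\overline{\eta}}$ is connected. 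Without this, the surjectivity of $sp$ --- which you also use afterwards to convert essential surjectivity into injectivity --- is unproven. A secondary imprecision: the prime-to-$p$ quotient classifies covers whose monodromy group (equivalently, Galois closure) has order prime to $p$, not covers of degree prime to $p$; the distinction matters in the Abhyankar step, since tameness is governed by the inertia subgroups of the Galois group of the closure.
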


Examples where $sp$ is not injective are well known. One of the main blocks in the proof of Theorem \ref{teoSPECIAL} is the following result (cf. also \cite[Theorem 5.7.10]{Sz09}): 

\begin{theorem}\label{teoSPECIAL1}
Let $R$ be a complete discrete valuation ring with algebraically closed residue field $k$. We set $S:=\Spec(R)$. Moreover let $f:X\to S$ be a proper and smooth morphism of schemes, with geometrically connected fibres and denote by $\eta$ and $s=\overline{s}$ the generic and special points of $S$ respectively. We set $X_s$ and $X_{\overline{\eta}}$ the special and generic geometric fibres respectively and we fix a geometric point $\overline{x}:\Spec(\Omega)\to X_{\overline{\eta}}$. Then the morphism 
$$\varphi: \pi^{\text{\'et}}(X_{\overline{\eta}},\overline{x})\to \pi^{\text{\'et}}(X,\overline{x})$$ is surjective and:
\begin{itemize}
    \item if $char(k)=0$ then $\varphi$ is an isomorphism;
    \item if $char(k)=p>0$ then $$\varphi^{(p')}:\pi^{\text{\'et}}(X_{\overline{\eta}},\overline{x})^{(p')}\to \pi^{\text{\'et}}(X,\overline{x})^{(p')}$$ is an isomorphism.
\end{itemize}
\end{theorem}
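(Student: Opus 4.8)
The plan is to recast the statement entirely in terms of Galois categories of finite étale covers and to prove it in three stages, systematically exploiting that $R$ is complete (hence Henselian) with algebraically closed residue field $k$. The map $\varphi\colon\pi^{\text{\'et}}(X_{\overline{\eta}},\overline{x})\to\pi^{\text{\'et}}(X,\overline{x})$ is induced by the pullback functor from finite étale covers of $X$ to those of $X_{\overline{\eta}}$, so surjectivity of $\varphi$ amounts to connected covers pulling back to connected covers, while $\varphi$ being an isomorphism (on the prime-to-$p$ quotient when $\operatorname{char}k=p$) amounts to this pullback functor being an equivalence (of the prime-to-$p$ subcategories). The first stage is to use properness to replace $X$ by its special fibre $X_s$: since $f\colon X\to S$ is proper and $R$ is complete local, Grothendieck's existence theorem (formal GAGA) together with the topological invariance of the étale site under nilpotent thickenings shows that pullback along $i\colon X_s\hookrightarrow X$ gives an equivalence between finite étale covers of $X$ and of $X_s$, whence $\pi^{\text{\'et}}(X,\overline{x})\cong\pi^{\text{\'et}}(X_s,\overline{x})$. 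This reduces everything to the comparison of $X_{\overline{\eta}}$ with the special fibre, i.e. to the specialization map of Theorem \ref{teoSPECIAL}.

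For the second stage (surjectivity of $\varphi$), I would take a connected finite étale cover $h\colon Y\to X$ and show $Y_{\overline{\eta}}$ is connected. Since $X\to S$ is smooth, $X$ is regular and $Y$ is normal; as $Y\to S$ is proper and $R$ is Henselian, the number of connected components of $Y$ equals that of $Y_s$, so $Y_s$ is connected, and it is reduced because it is étale over the smooth (hence reduced) $X_s$. Thus $H^{0}(Y_s,\mathcal{O}_{Y_s})=k$, and cohomology and base change forces the finite flat $R$-algebra $H^{0}(Y,\mathcal{O}_Y)$ to satisfy $H^{0}(Y,\mathcal{O}_Y)\otimes_R k=k$, hence to equal $R$; therefore the geometric generic fibre $Y_{\overline{\eta}}$ is connected. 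Full faithfulness of the pullback functor, the other ingredient needed for the isomorphism claims, follows from the normality of $X$ and the density of the generic fibre, so a cover of $X$ is determined by its restriction to $X_{\overline{\eta}}$.

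The heart of the proof, and the step I expect to be the main obstacle, is essential surjectivity subject to the prime-to-$p$ constraint: every finite étale cover of $X_{\overline{\eta}}$ of degree prime to $p$ (every cover, when $\operatorname{char}k=0$) must arise from a cover of $X$. Given such a cover, I would first descend it to $X_{\eta'}$ over a finite extension $K'/K$, form the normalization $\widetilde{Y}$ of $X_{R'}$ in it (where $R'$ is the integral closure of $R$ in $K'$), and analyze the ramification of $\widetilde{Y}\to X_{R'}$ along the special fibre, which is a divisor. The prime-to-$p$ hypothesis is exactly what makes this ramification tame, so Abhyankar's lemma, after a further tamely ramified base change $R'\to R''$, turns $\widetilde{Y}$ into a finite étale cover, while purity of the branch locus (Zariski--Nagata) rules out branching in codimension $\geq 2$. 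The delicate point is to remove the auxiliary base extension and land on an honest cover over $R$ itself: here the algebraically closed residue field makes $R''/R$ totally (tamely) ramified with trivial residue extension, so $X_{R''}$ and $X$ share the same special fibre $X_s$; applying Stage~1 to both then yields equivalences $\text{(covers of }X)\simeq\text{(covers of }X_s)\simeq\text{(covers of }X_{R''})$, across which the cover descends. In characteristic $0$ there is no wild ramification, so this mechanism produces a full equivalence; in characteristic $p$ it controls precisely the tame, prime-to-$p$ quotient, which is why $sp$ fails to be injective in general. It is worth emphasizing that the analogue of this base-extension-removal step on the level of group schemes is exactly what is not automatic and is supplied by Theorem \ref{teoGENERAL}.
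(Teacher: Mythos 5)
The first thing to note is that the paper contains no proof of Theorem \ref{teoSPECIAL1}: it is recalled as classical background, with explicit citations to \cite{SGA} (Expos\'e X, Th\'eor\`eme 3.8 and Corollaire 3.9) and to \cite[Theorem 5.7.10]{Sz09}, and is then used as an \emph{input} to the paper's own new result, Theorem \ref{teoFUND}. So there is no internal proof to compare yours against; what you have written is a reconstruction of the classical SGA~1 argument, and as a sketch it is essentially correct and follows the standard route: (i) properness of $X\to S$ and completeness of $R$ give the equivalence between finite \'etale covers of $X$ and of $X_s$ (formal GAGA plus invariance of the \'etale site under nilpotent thickenings, i.e.\ SGA~1, Exp.~X, Th.~2.1 --- exactly the statement the paper itself invokes inside the proof of Lemma \ref{claimstar}); (ii) surjectivity of $\varphi$ by showing a connected cover of $X$ has connected geometric generic fibre, via connectedness of $Y_s$ over the Henselian base and semicontinuity/base change for $H^0$; (iii) the prime-to-$p$ injectivity by descending a cover of $X_{\overline{\eta}}$ to a finite extension $K'$, normalizing, invoking tameness, Abhyankar's lemma and Zariski--Nagata purity, and then removing the auxiliary base extension because a totally ramified extension $R''/R$ does not change the special fibre, so covers of $X_{R''}$ and of $X$ are identified through $X_s$. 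Your closing observation is also apt: this same ``same special fibre'' mechanism is what the paper exploits at the \'etale level in Lemma \ref{claimstar}, and the whole point of Theorem \ref{teoGENERAL} is that no such mechanism is available at the level of the fundamental group scheme.

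Two points in your sketch would need repair in a complete write-up, though neither affects the architecture. First, tameness: for a non-Galois cover, ``degree prime to $p$'' does \emph{not} force the ramification indices along the special fibre to be prime to $p$ (e.g.\ a degree-$3$ cover can have a point of index $2$ when $p=2$, since one only knows $\sum_i e_if_i$ equals the degree). You should instead work with Galois covers whose group $G$ has order prime to $p$ --- these are cofinal among the covers classified by $\pi^{\text{\'et}}(X_{\overline{\eta}})^{(p')}$ --- for which the wild inertia subgroups, being $p$-groups inside $G$, are trivial, so the ramification is automatically tame. Second, full faithfulness of the pullback functor is not a consequence of normality and density alone: injectivity on Hom-sets is, but surjectivity on Hom-sets is obtained by applying your Stage~2 connectedness statement to the fibre product $Y\times_X Z$, since morphisms of covers correspond to suitable connected components of it. Both are standard fixes within the classical proof you are reproducing.
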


Same as for $sp$ it is known that $\varphi$ may not be injective, for instance when $X$ is a relative elliptic curve with supersingular special fibre.

While trying to extend the results of Theorem \ref{teoSPECIAL1} to Nori's fundamental group scheme one immediately notices that it is not possible to even compare the fundamental group schemes $\pi(X_{\overline{\eta}})$ and  $\pi(X)$ for a very simple reason: they are not just groups, they are group schemes over $k(\overline{\eta})$ and $R$ respectively,  so no group scheme morphism can exist between them. However we can at least build the following \emph{pro-finite} group scheme morphism
$$\psi: \pi(X_{\eta},x_{\eta})\to \pi(X,x)_{\eta}$$
where $x:\Spec(R)\to X$ is a section of $f$ and $x_{\eta}:\Spec(k({\eta}))\to X_{\eta}$ its restriction to $X_{\eta}$. The morphism $\psi$ being the schematic generalization of $\varphi$ it is natural to wonder whether it is a closed immersion or a faithfully flat morphism, when it is defined. Though $\psi$ is known to be always faithfully flat (cf. \cite[Proposition 5.5]{AEG20}), it is not known in general if it is a closed immersion (thus an isomorphism). It is known to be an isomorphism if $X$ is an abelian scheme (thus marking a first important difference with morphism $\varphi$) and in many cases if we restrict $\psi$ to the biggest abelian quotients (cf. \cite{A11}). Here we are going to prove that $\psi$ restricted to the prime to $p$ biggest quotients is an isomorphism, more precisely:

\begin{theorem}\label{teoFUND}
Let notations be as in Theorem \ref{teoSPECIAL1}, where moreover we endow $X$ with a section $x\in X(S)$ and assume $char(k)=p>0$ then the morphism:
$$\psi^{(p')}: \pi(X_{\eta},x_{\eta})^{(p')}\to \pi(X,x)_{\eta}^{(p')}$$
is an isomorphism.
\end{theorem}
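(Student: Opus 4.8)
The plan is to prove that $\psi^{(p')}$ is at once faithfully flat and a closed immersion, hence an isomorphism. Faithful flatness is essentially given: the morphism $\psi$ is faithfully flat by \cite[Proposition 5.5]{AEG20}, and forming the biggest prime-to-$p$ quotients preserves faithful flatness, so $\psi^{(p')}$ is faithfully flat as well. The entire difficulty is therefore concentrated in showing that $\psi^{(p')}$ is a closed immersion. By the Tannakian dictionary, pulling back along $\psi^{(p')}$ yields a functor $\Rep_K(\pi(X,x)_{\eta}^{(p')}) \to \Rep_K(\pi(X_\eta,x_\eta)^{(p')})$ which, by faithful flatness, is already fully faithful with image stable under subobjects; to conclude that $\psi^{(p')}$ is a closed immersion — and thus an equivalence — it then suffices to prove that this functor is essentially surjective. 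Concretely, I must show that every finite Nori-reduced pointed prime-to-$p$ torsor $f : Y \to X_\eta$ is the generic fibre of a torsor over $X$ defined over $R$ itself.

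Fix such an $f$ under a finite prime-to-$p$ group scheme $G$ over $K=k(\eta)$. The first remark is that, since $\mathrm{char}(k)=p$, the group $G$ has trivial connected component and is therefore étale, so that $f$ is a finite étale Nori-reduced cover. This places us exactly in the regime governed by the specialization theory: I would base change $f$ to the geometric generic fibre $X_{\overline\eta}$, where it becomes a connected prime-to-$p$ étale cover, and invoke the isomorphism $\varphi^{(p')}$ of Theorem \ref{teoSPECIAL1} to identify it with the restriction of a prime-to-$p$ étale cover of the total space $X$.

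Descending this cover back down through a suitable finite separable extension $K'/K$, with $R'$ the integral closure of $R$ in $K'$, produces a finite étale morphism $\varphi : Z \to X$ whose restriction to the generic fibre realizes $\lambda \circ f_{K'}$ exactly as in Figure \ref{figura1}. Arranging $\varphi$ to be an actual torsor under a finite étale group scheme $H$, it is finite and faithfully flat; moreover $Z\times_X Z \simeq Z\times H$ gives $\varphi^{*}\varphi_{*}\mathcal{O}_Z \simeq \mathcal{O}_Z\otimes_R R[H]$, a free $\mathcal{O}_Z$-module, while the geometric connectedness built into the construction forces $\mathcal{O}_Z(Z)=R'$. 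These are precisely the hypotheses of Theorem \ref{teoGENERAL}, which I would then apply to obtain a torsor $f_1 : Y_1 \to X$ under a quasi-finite affine flat $R$-group scheme $M$ with $M_K \simeq G$ and generic fibre equal to $f$. Since $M_K$ is finite and prime-to-$p$, the object attached to $f_1$ on the generic fibre lands in $\Rep_K(\pi(X,x)_{\eta}^{(p')})$ and pulls back along $\psi^{(p')}$ to the object attached to $f$; this is exactly the essential surjectivity sought, so $\psi^{(p')}$ is a closed immersion and hence an isomorphism.

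The step I expect to be the main obstacle is the passage from Theorem \ref{teoSPECIAL1} to a morphism $\varphi : Z \to X$ over $R'$ satisfying every hypothesis of Theorem \ref{teoGENERAL}. The specialization theorem is formulated for geometric fibres over an algebraically closed residue field, so one must descend the resulting cover to a genuine finite and faithfully flat $R'$-morphism and, in doing so, keep careful track of two points: that the freeness of $\varphi^{*}\varphi_{*}\mathcal{O}_Z$ is preserved — which is why it is convenient to replace the cover by its Galois closure and work with an actual torsor — and that the ring of global sections remains exactly $\mathcal{O}_Z(Z)=R'$. Only once these are secured does Theorem \ref{teoGENERAL} apply and deliver the extension over $R$ that the essential surjectivity argument requires.
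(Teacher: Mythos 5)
Your overall strategy coincides with the paper's: reduce to showing that every pointed Nori-reduced prime-to-$p$ torsor $f:Y\to X_K$ extends over $X$ itself, obtain an extension after a finite extension $K'/K$ from Grothendieck's Theorem \ref{teoSPECIAL1}, and feed the resulting morphism $\varphi:Z\to X$ into Theorem \ref{teoGENERAL}. The reduction to extending torsors, the faithful flatness input from \cite[Proposition 5.5]{AEG20}, and the identification $\sO_Z(Z)=R'$ are all fine. But there is a genuine gap at exactly the step you yourself flag as the main obstacle: the verification that $\varphi^{*}\varphi_{*}\sO_Z$ is a free $\sO_Z$-module. You propose to ``arrange $\varphi$ to be an actual torsor under a finite \'etale group scheme $H$'' and deduce freeness from $Z\times_X Z\simeq Z\times H$. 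This cannot be arranged. What Theorem \ref{teoSPECIAL1} produces is an $H$-torsor $h:Z\to X_{R'}$ over the base-changed scheme $X_{R'}$, with $H$ a group scheme over $R'$; the morphism required by Theorem \ref{teoGENERAL} is the composite $\varphi=\gamma\circ h$ with the projection $\gamma:X_{R'}\to X$, and its hypothesis $\varphi_K=\lambda\circ f_{K'}$ pins down the generic fibre, so you cannot replace $Z$ by a Galois closure either. Now $\gamma$ is not \'etale: since $k$ is algebraically closed, any nontrivial extension $K'/K$ is totally ramified, so $R'\otimes_R k$ has nilpotents and $X_{R'}\to X$ is ramified along the special fibre; hence $\varphi$ is not \'etale (your parenthetical claim that $\varphi$ is finite \'etale is already false). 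Moreover, at least in the typical tame case $p\nmid[K':K]$, the composite $\varphi$ cannot be a torsor under \emph{any} finite flat $R$-group scheme $N$: such an $N$ would have order $|G|\cdot[K':K]$ prime to $p$, hence be \'etale over $R$, forcing $\varphi$ to be \'etale, a contradiction. So your justification of the key hypothesis collapses, and with it the application of Theorem \ref{teoGENERAL}.

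The paper fills this hole with a separate argument (Lemma \ref{claimstar}), which is the real content of the application. Since $k$ is algebraically closed, $R$ and $R'$ have the same residue field, so $X$ and $X':=X_{R'}$ have isomorphic special fibres; by \cite[Expos\'e X, Th\'eor\`eme 2.1]{SGA} this yields $\pi^{\text{\'et}}(X,\overline{x})\simeq\pi^{\text{\'et}}(X',\overline{x})$, and therefore the connected Galois covering $h:Z\to X'$ \emph{descends} to a finite \'etale Galois covering $\overline{h}:T\to X$ with $Z\simeq T\times_X X'$. Two flat base change computations then conclude: first $\overline{\gamma}_{*}\sO_Z\simeq\sO_T^{\oplus n}$ with $n=[K':K]$ (using $\beta_{*}\sO_{R'}\simeq\sO_R^{\oplus n}$), and second $\overline{h}^{*}\varphi_{*}\sO_Z=\overline{h}^{*}\gamma_{*}h_{*}\sO_Z\simeq\overline{\gamma}_{*}h^{*}h_{*}\sO_Z\simeq\overline{\gamma}_{*}\sO_Z^{\oplus g}\simeq\sO_T^{\oplus gn}$, where the torsor property of $h$ is used only over $X'$, which is where it actually holds. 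Applying $\overline{\gamma}^{*}$ gives $\varphi^{*}\varphi_{*}\sO_Z\simeq\sO_Z^{\oplus gn}$. The freeness is thus a genuinely global statement about the composite $\varphi$, obtained by descending the covering along the isomorphism of \'etale fundamental groups, not by upgrading $\varphi$ to a torsor; without an argument of this kind your proof does not go through.
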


The notation cannot lead to any ambiguity as the two operations of \emph{taking the prime to $p$ quotient} and \emph{reducing to the generic fiber} are interchangeable. That the morphism $\psi^{(p')}$ is faithfully flat is, again, a consequence of \cite[Proposition 5.5]{AEG20}. In order to prove that it is an isomorphism it is thus sufficient to prove that it is a closed immersion.

After this somewhat historical introduction, if we denote by $K$ the fraction field of $R$, then it may be useful to come back to previous notations for the generic fibre, i.e. $X_{\eta}=X_K$. Now take a finite Galois $G$-torsor $f:Y \rightarrow X_K$, where $X_K$ is the generic fibre of $X$, such that $\text{mcd}(p,|G|)=1$, with a point $y: \text{Spec}(K)\rightarrow Y$, over $x_K$. To prove that the morphism $\psi^{(p')}$ is a closed immersion it is enough to show that every such torsor $f: Y \rightarrow X_K$ can be extended to a $\widehat{G}$-torsor $f_1: Y_1 \rightarrow X$, for some finite and flat $R$-group scheme $\widehat{G}$, endowed with a point $\widehat{y} \in Y_1(R)$ over $x$, this is an easy consequence of \cite[Corollary 3.1]{A08}, as we already know that $\psi^{(p)}$ is faithfully flat. In order to prove this we will apply Theorem \ref{teoGENERAL} to Grothendieck's theory. Consequently, first of all we need to find a morphism $\varphi: Z \rightarrow X$ satisfying the hypotheses of the theorem. 

By Theorem \ref{teoSPECIAL1} we know that there exists a finite field extension $K\subseteq K'$ such that the $G_{K^{\prime}}$-torsor $Y'\to X_{K'}$ obtained as pull back of $Y\to X_K$ over $X_{K'}$ can be extended over $X^{\prime}:= X_{R'}$, where $R'$ is the integral closure of $R$ in $K'$. More precisely there exist a finite and flat $R'$-group scheme $H$ and a $H$-torsor $h: Z\to X_{R'}$, pointed over $x_{R'}$ (pull back of $x$), extending $Y'\to X_{K'}$.  As $\Spec(R')\to \Spec(R)$ is finite and faithfully flat (see for example \cite[II, 2 Proposition 3]{Ser68}), the same holds for $X_{R'}\to X$. In particular the composition $\varphi: Z \to X$ given by $\varphi:= \gamma \circ h$, where $\gamma: X^{\prime} \rightarrow X$ is the obvious morphism, is finite and faithfully flat too. Notice that by construction $Z$ is connected (being a Galois étale cover). The previous construction can be visualized in the following figure.
\begin{figure}[H]
    \centering
    \begin{align*}
    \xymatrixrowsep{0.05in}
\xymatrix{
Y^{\prime} \ar@{.>}[rr] \ar[rd] \ar[dd]_{f_{K^{\prime}}} & & Z \ar@{.>}[dd]_{h} & \\
& Y \ar[dd]_(.3){f}& & \\
X_{K^{\prime}} \ar[rr]|(.5)\hole \ar[dd]_{p_{K^{\prime}}}  \ar[rd]_{\lambda}& & X^{\prime} \ar[rd]_{\gamma} \ar[dd] & \\
& X_{K} \ar[dd]_(.3){p_K} \ar[rr] & & X \ar[dd]^{p} \\
\text{Spec}(K^{\prime}) \ar[rr]|(.5)\hole\ar[rd] & & \text{Spec}(R^{\prime}) \ar[rd] & \\
& \text{Spec}(K) \ar[rr] & & \text{Spec}(R)
}
\end{align*}
\caption{Model of a prime to $p$ torsor under a finite extension of $K$.}
    \label{figuraimportante}
\end{figure}
Summarizing, we have a finite and faithfully flat morphism $\varphi: Z \rightarrow X$  whose generic fibre equals $\lambda \circ f_{K^{\prime}}$. In this case we also have $\sO_Z(Z)= R'$. Indeed, the Galois condition on the given torsor tells us that $\sO_{Y'}(Y')=K'$, so we have an immersion $\sO_Z(Z) \to K'$, then the result follows because $\sO_Z(Z)$ is finite (and therefore integral) over $R'$ and because $R'$ is the integral closure of $R$ in $K'$. Therefore the only condition left to verify in order to apply Theorem \ref{teoGENERAL} is that $\varphi^{*}\varphi_{*}\mathcal{O}_Z$ is a free $\sO_Z$-module. We prove this in the next lemma.
\begin{lemma}\label{claimstar} Set $D:= h_{*}\mathcal{O}_Z$ then $\gamma_{*}D$ is trivialized by $\varphi$.
\end{lemma}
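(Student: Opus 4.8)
The plan is to unwind the statement and then reduce the required global freeness to a fibrewise computation. First I observe that $\gamma_*D=\gamma_*h_*\mathcal{O}_Z=\varphi_*\mathcal{O}_Z$, so that the assertion ``$\gamma_*D$ is trivialized by $\varphi$'' means exactly that $E:=\varphi^*\varphi_*\mathcal{O}_Z$ is a free $\mathcal{O}_Z$-module, which is precisely the hypothesis still missing in order to invoke Theorem \ref{teoGENERAL}. Since $\varphi$ is finite and faithfully flat, flat base change gives $E\cong \mathrm{pr}_{1*}\mathcal{O}_{Z\times_X Z}$, so $E$ is at least locally free of rank $nd$, where $n:=|\Gamma|$ and $d:=[K':K]$; the whole point is to upgrade this to \emph{global} freeness.

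Two structural inputs feed the computation. On one hand, because $R'$ is a complete discrete valuation ring whose residue field is the algebraically closed field $k$, the finite étale group scheme $H$ is the constant group $\Gamma_{R'}$ attached to a finite group $\Gamma$ of order prime to $p$; hence $h\colon Z\to X'$ is a $\Gamma$-torsor, $Z\times_{X'}Z\cong\coprod_{\Gamma}Z$, and therefore $h^*D=h^*h_*\mathcal{O}_Z\cong\mathcal{O}_Z^{\oplus n}$ \emph{globally}. On the other hand $\gamma\colon X'=X_{R'}\to X$ is the base change of $\Spec(R')\to\Spec(R)$, and since $R'$ is a free $R$-module of rank $d$ we get $\gamma_*\mathcal{O}_{X'}\cong\mathcal{O}_X^{\oplus d}$ and, after pulling back, $\varphi^*\gamma_*\mathcal{O}_{X'}=\mathcal{O}_Z\otimes_R R'\cong\mathcal{O}_Z^{\oplus d}$. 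Writing $W:=Z\times_X X'\cong Z\times_R R'$ with projections $a\colon W\to Z$ and $b\colon W\to X'$, flat base change yields $E\cong a_*b^*D$ with $a_*\mathcal{O}_W\cong\mathcal{O}_Z^{\oplus d}$, and the expected outcome is $E\cong (h^*D)\otimes_R R'\cong\mathcal{O}_Z^{\oplus nd}$.

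The hard part is that this last identification is \emph{not} a formal consequence of the two inputs. The sheaf $b^*D$ need not be free on $W$: the two copies of $R'$ occurring in $Z\times_X Z$ — the ``internal'' one coming from $\mathcal{O}_Z(Z)=R'$ and the ``external'' one produced by the base change $\gamma$ — are entangled through the structural map $R'\to\mathcal{O}_Z$, so that $b^*D$ differs from $a^*h^*D$ and no naive projection formula applies. Equivalently, $Z\times_X Z$ does not split as $\coprod_\Gamma W$ over $Z$, since such a splitting would amount to trivializing a torsor admitting no canonical section. This entanglement is exactly the obstacle, and it is of a global nature: locally on $X$ the module $E$ is visibly free, but it is the gluing of the local trivializations that must be justified.

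To overcome it I would argue fibrewise over $R'$ and conclude by semicontinuity and Grauert's theorem, in the same spirit as Proposition \ref{contruccionomega}. Since we are free to enlarge $K'$ in the construction (the torsor still extends over the corresponding ring), I may assume $K'/K$ is Galois; then $Y'\times_{X_K}Y'$ splits as a disjoint union of $nd$ copies of $Y'$, so the generic fibre $E_{K'}$ on $Y'$ is free of rank $nd$ and $\dim_{K'}E_{K'}(Y')=nd$. On the special fibre, $Z_s$ is a $\Gamma$-torsor over the integral normal proper fibre $X_s$ (with $\mathcal{O}_{X_s}(X_s)=k$), hence falls under Propositions \ref{tonini} and \ref{leizhang}; the sheaf $E_s$ is essentially finite, being the pullback along the finite morphism $\varphi$ of a bundle on $X_s$ trivialized by a finite cover, so $\dim_k E_s(Z_s)\le nd$ by loc. cit., while upper semicontinuity forces $\dim_k E_s(Z_s)\ge \dim_{K'}E_{K'}(Y')=nd$. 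Thus both fibres of $E$ are free of rank $nd$ with the same number of global sections, and Grauert's theorem (\cite[III, Corollary 12.9]{Har77}) shows that $p''_*E$ is free over $R'$, where $p''\colon Z\to\Spec(R')$ is the structure morphism, and that the natural map $p''^{*}p''_*E\to E$ is an isomorphism on each fibre, hence an isomorphism. Therefore $E\cong\mathcal{O}_Z^{\oplus nd}$, i.e. $\gamma_*D$ is trivialized by $\varphi$, completing the verification of the last hypothesis of Theorem \ref{teoGENERAL}.
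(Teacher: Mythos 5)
Your reduction of the lemma to the global freeness of $E:=\varphi^{*}\varphi_{*}\mathcal{O}_Z$, your diagnosis of the ``entanglement'' of the two copies of $R'$ as the real difficulty, and the semicontinuity/Grauert endgame are all reasonable (the endgame is the same device the paper uses in Proposition \ref{contruccionomega}). But there is a genuine gap at the one step that carries all the weight: the assertion that $E_s$ is essentially finite. Your justification --- ``being the pullback along the finite morphism $\varphi$ of a bundle on $X_s$ trivialized by a finite cover'' --- is circular: the bundle in question is $V_s:=(\varphi_{*}\mathcal{O}_Z)|_{X_s}$, and the only candidate trivializing covers are the restrictions of $\varphi$ itself over the special fibre, i.e.\ exactly what the lemma asserts. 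Nor does essential finiteness follow formally from what you have established. Since $k$ is algebraically closed, $R'/R$ is totally ramified of degree $d$, so the scheme-theoretic fibre $X'\times_R k$ is the nilpotent thickening $X_s\times_k\Spec(R'/\mathfrak{m}_R R')$ of $X_s$; as affine pushforward commutes with base change, $V_s$ is the pushforward along this thickening of $D/\pi'^{d}D$ (with $\pi'$ a uniformizer of $R'$), hence $V_s$ is only an \emph{iterated extension} of $d$ copies of the essentially finite bundle $D_s=(h_s)_{*}\mathcal{O}_{Z_s}$. Essentially finite bundles are not closed under extensions: already extensions of $\mathcal{O}_{X_s}$ by $\mathcal{O}_{X_s}$ are classified by $H^1(X_s,\mathcal{O}_{X_s})$ and are in general not essentially finite. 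The entanglement you flagged on $Z\times_X Z$ thus reappears verbatim on the special fibre, as the question of whether the deformation $D/\pi'^{d}D$ of $D_s$ over $R'/\mathfrak{m}_R R'$ is the constant one; your proof assumes this away rather than proving it.

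What is missing is precisely the geometric input the paper uses. An infinitesimal repair of your scheme exists: $Z\times_R k\to X_s\times_k\Spec(R'/\mathfrak{m}_R R')$ is a torsor under the constant \'etale group, and \'etale covers are insensitive to nilpotent thickenings, so this torsor is isomorphic to the constant one $Z_s\times_k\Spec(R'/\mathfrak{m}_R R')$; this yields $V_s\cong D_s^{\oplus d}$, which is trivialized by $h_s$, and then your fibrewise argument closes. The paper makes this move once, globally, and avoids any fibrewise analysis: since $X$ and $X'$ are proper over complete local rings with the same algebraically closed residue field, \cite[Expos\'e X, theoreme 2.1]{SGA} gives $\pi^{\text{\'et}}(X,\overline{x})\simeq\pi^{\text{\'et}}(X',\overline{x})$, so by \cite[Corollary 5.5.8]{Sz09} the connected Galois cover $h:Z\to X'$ descends to a connected Galois cover $\overline{h}:T\to X$ with $Z\cong T\times_R R'$; then flat base change gives $\overline{h}^{*}\gamma_{*}D\cong\overline{\gamma}_{*}h^{*}D\cong\overline{\gamma}_{*}\mathcal{O}_Z^{\oplus n}\cong\mathcal{O}_T^{\oplus nd}$, and applying $\overline{\gamma}^{*}$ yields $\varphi^{*}\gamma_{*}D\cong\mathcal{O}_Z^{\oplus nd}$. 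Two smaller points: your reduction to ``$K'/K$ Galois'' requires $K'/K$ separable (and splitting $Y'\times_{X_K}Y'$ into $nd$ copies of $Y'$ would further require $G_{K'}$ constant, which Galois alone does not give); but none of this is needed, since $(\varphi_K)_{*}\mathcal{O}_{Y'}\cong(f_{*}\mathcal{O}_Y)^{\oplus d}$ as $\mathcal{O}_{X_K}$-modules is trivialized by $f$, so the generic fibre of $E$ is free of rank $nd$ for an arbitrary finite extension $K'/K$.
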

\begin{proof}
First we notice that, $k$ being algebraically closed, there is an isomorphism of special fibres $X_s \simeq X_s^{\prime}$, as the residue field of $R^{\prime}$ is equal to $k$, same as the residue field of $R$.
This implies (cf. \cite[Expos\'e X, theoreme 2.1]{SGA}), that there is an isomorphism of \'etale fundamental groups $\pi^{\text{\'et}}(X,\overline{x}) \simeq \pi^{\text{\'et}}(X^{\prime}, \overline{x})$, where $\overline{x}$ is a geometric point taken with respect to $\overline{K}$, an algebraic closure of $K$. Given that $h$ is actually a (connected) finite \'etale Galois covering of $X^{\prime}$, the above isomorphism of \'etale groups implies the existence of a (connected) finite \'etale galois covering $\overline{h}: T \rightarrow X$, with pullback over $R^{\prime}$ isomorphic to $Z \rightarrow X^{\prime}$, this is because of \cite[Corollary 5.5.8]{Sz09}. Then we have the following Cartesian squares
\begin{align*}
\xymatrix{
Z \ar[d]_h \ar[r]^{\overline{\gamma}} & T \ar[d]^{\overline{h}} \\
X^{\prime} \ar[r]^{\gamma} \ar[d]^{p^{\prime}} & X \ar[d]^p\\
\text{Spec}(R^{\prime}) \ar[r]^{\beta} &\text{Spec}(R).
}
\end{align*}
 Recall that $R^{\prime}$ is a free $R$-module of rank $n=[K^{\prime}:K]$, this induces the isomorphism of $\mathcal{O}_R$-modules  $\beta_{*}\mathcal{O}_{R^{\prime}} \simeq \mathcal{O}_R^{\oplus n}$. Using commutativity of the direct image with flat base change in the lower square we have 
\begin{align*}
\overline{\gamma}_{*}\mathcal{O}_Z = \overline{\gamma}_{*}h^{*}(p^{\prime})^{*}\mathcal{O}_{R^{\prime}} \simeq \overline{h}^{*}p^{*}\beta_{*}\mathcal{O}_{R^{\prime}} \simeq \overline{h}^{*}p^{*} \mathcal{O}_R^{\oplus n} \simeq \mathcal{O}_{T}^{\oplus n}.
\end{align*} 
But $h:Z \rightarrow X^{\prime}$ is a torsor, then we have  $h^{*}h_{*}\mathcal{O}_Z \simeq \mathcal{O}_Z^{\oplus g}$, where $g = |H|= |G|$, in other words  $h^{*}D \simeq \mathcal{O}_Z^{\oplus g}$. Again using flat base change we obtain that
\begin{align*}
\overline{h}^{*}\gamma_{*}D \simeq \overline{\gamma}_{*}h^{*}D \simeq \overline{\gamma}_{*}\mathcal{O}_Z^{\oplus g} \simeq \mathcal{O}_{T}^{\oplus gn}.
\end{align*}
Applying $\overline{\gamma}^{*}$ to the latter we obtain
\begin{align*}
\overline{\gamma}^{*}\overline{h}^{*}\gamma_{*}D \simeq \mathcal{O}_{Z}^{\oplus gn},
\end{align*}
but $\overline{\gamma}^{*}\overline{h}^{*} = (\gamma \circ h)^{*} = \varphi^{*}$, in other words $\varphi^{*}\gamma_{*}D \simeq \mathcal{O}_{Z}^{\oplus gn}$, as desired.
\end{proof}

It is interesting to point out that in this case $M$ is finite, this is why since the beginning of this section we considered the \emph{classical} pro-finite fundamental group scheme over $X$ instead of its quasi-finite counterpart. This will conclude the proof of Theorem \ref{teoFUND}. We prove this fact in the following.

\begin{proposition}
The $R$-group scheme $M$ built above is finite.
\end{proposition}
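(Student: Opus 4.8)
The plan is to identify $M$ with the constant finite group scheme attached to the Galois cover $\overline{h}\colon T\to X$ produced in Lemma \ref{claimstar}, thereby proving finiteness without any delicate study of the special fibre of $M$ as a group scheme. Recall that $\overline{h}\colon T\to X$ is a connected finite étale Galois cover whose pullback along $\gamma\colon X^{\prime}\to X$ is $h\colon Z\to X^{\prime}$; let $\Gamma$ denote its Galois group, which is finite and constant since $k=\overline{k}$ and $R$ is complete, so that $\overline{h}$ is a $\Gamma$-torsor with $|\Gamma|=|H|=|G|=:g$. First I would record two preliminary facts. From the computation inside Lemma \ref{claimstar} one has $\overline{\gamma}_{*}\sO_Z\simeq\sO_T^{\oplus n}$, whence $\varphi_{*}\sO_Z=\overline{h}_{*}\overline{\gamma}_{*}\sO_Z\simeq(\overline{h}_{*}\sO_T)^{\oplus n}$, so the generator $V=\varphi_{*}\sO_Z$ of $\sT$ is, up to a direct sum, the bundle $W:=\overline{h}_{*}\sO_T$. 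Secondly, since $T_s\to X_s$ is a connected finite étale cover of the integral normal proper $k$-scheme $X_s$ with $k$ algebraically closed, the fibres of $T\to\Spec(R)$ are geometrically connected and reduced with $h^{0}(\sO)=1$, so Grauert's theorem gives $\sO_T(T)=R$.

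The heart of the argument is to show that $\sT$ coincides with the category of vector bundles on $X$ trivialized by the étale cover $\overline{h}$ alone. One inclusion is immediate, since $\varphi=\overline{h}\circ\overline{\gamma}$. For the converse, suppose $E\in\Vect(X)$ satisfies $\varphi^{*}E\simeq\sO_Z^{\oplus v}$; I must descend the triviality of $\overline{\gamma}^{*}(\overline{h}^{*}E)$ along the faithfully flat morphism $\overline{\gamma}\colon Z=T_{R^{\prime}}\to T$, which is the base change of $\Spec(R^{\prime})\to\Spec(R)$. As $R\to R^{\prime}$ is flat, flat base change for cohomology yields $H^{0}(T,\overline{h}^{*}E)\ox_R R^{\prime}\simeq H^{0}(Z,\varphi^{*}E)=\sO_Z(Z)^{\oplus v}=(R^{\prime})^{\oplus v}$, using $\sO_Z(Z)=R^{\prime}$. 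Hence $H^{0}(T,\overline{h}^{*}E)$ is a finitely generated $R$-module that becomes free of rank $v$ after the faithfully flat extension $R^{\prime}$, so it is itself free of rank $v$ over the local ring $R$; and the evaluation map $\sO_T^{\oplus v}\to\overline{h}^{*}E$ becomes an isomorphism after applying $\overline{\gamma}^{*}$ (where it is the identity of $\sO_Z^{\oplus v}$), hence is already an isomorphism. Thus $\overline{h}^{*}E\simeq\sO_T^{\oplus v}$, i.e. $E$ is trivialized by $\overline{h}$, and the two categories agree.

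Finally I would invoke descent along the $\Gamma$-torsor $\overline{h}\colon T\to X$: the category of vector bundles on $X$ trivialized by $\overline{h}$ is equivalent, through the identification of descent data with $\Gamma$-equivariant structures on free $\sO_T$-modules — which, because $\sO_T(T)=R$, are exactly homomorphisms $\Gamma\to\mathrm{GL}_v(R)$ — to the category $\Rep^0_R(\Gamma_R)$ of finite projective representations of the constant group scheme $\Gamma_R$, compatibly with the fibre functor $\omega=x^{*}$ and the forgetful functor. Consequently $\sT\simeq\Rep^0_R(\Gamma_R)$ as Tannakian lattices over $R$, and the uniqueness of the group scheme attached to a Tannakian lattice in Theorem \ref{2.3.2DH} forces $M=\mathbf{Aut}_{\otimes}^R(\omega)\simeq\Gamma_R$, which is finite over $R$.

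I expect the main obstacle to be the middle step, the faithfully flat descent of triviality from $Z$ down to $T$: it is precisely there that one must combine the freeness over the local ring $R$ of the module of global sections with the normalization $\sO_Z(Z)=R^{\prime}$ and flat base change, rather than relying on the mere local-freeness that faithfully flat descent gives for granted. The subsequent identification of bundles trivialized by a connected Galois étale cover with representations of its Galois group is standard, but phrasing it correctly over the base $R$ — as $\Rep^0_R(\Gamma_R)$ for the \emph{constant} group scheme rather than over a field — is what makes the conclusion $M\simeq\Gamma_R$, and hence the finiteness, precise.
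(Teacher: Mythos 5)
Your proof is correct in substance but follows a genuinely different route from the paper's. The paper never identifies $M$ explicitly: it works over $R'$, using that $h\colon Z\to X_{R'}$ is a torsor under the \emph{finite} $R'$-group scheme $H$, to produce (via the faithful functor $\gamma^*\colon\sT_{R'}\to\sH$ and the equivalence $\sH\simeq\Rep^0_{R'}(H)$ of \cite[Lemma 2.11]{AE18}) a morphism of group schemes $H\to M_{R'}$ that is an isomorphism on generic fibres; flatness then gives an injection of Hopf algebras $R'[M_{R'}]\hookrightarrow R'[H]$, and Noetherianity of $R'$ forces $R'[M_{R'}]$, hence $R[M]$, to be finite. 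You instead stay over $R$: you prove that $\sT$ coincides with the category of bundles trivialized by the \'etale Galois cover $\overline h\colon T\to X$ of Lemma \ref{claimstar} (your descent step from $Z$ down to $T$ --- flat base change, freeness of $H^0(T,\overline h^{*}E)$ over the local ring $R$, and the fact that the faithfully flat $\overline\gamma$ reflects isomorphisms of the evaluation map --- is the real technical content, and it is sound), and then identify $\sT\simeq\Rep^0_R(\Gamma_R)$ by descent along the torsor under the constant group. This buys a strictly stronger conclusion: $M\simeq\Gamma_R$ is constant, in particular finite \'etale, and hence $G\simeq M_K$ is itself constant --- which may look alarming but is in fact forced in this setting (prime-to-$p$ order over a strictly Henselian base), and is consistent with the theorem being proved. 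The paper's argument is shorter and avoids descent-data bookkeeping; yours makes the structure of $M$ explicit.

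Two points in your write-up need to be shored up. First, your justification of $\sO_T(T)=R$ presupposes that $T_s$ is connected, which you assert without proof; it is cleaner to argue directly: $T$ is $R$-flat, so $\sO_T(T)$ injects into $\sO_{T_K}(T_K)$, and flat base change together with $\sO_{T_K}(T_K)\ox_K K'\simeq\sO_{Y'}(Y')=K'$ gives $\sO_{T_K}(T_K)=K$; thus $\sO_T(T)$ is a finite $R$-subalgebra of $K$, hence equals $R$ because $R$ is integrally closed. Second, the compatibility of your equivalence with the fibre functors --- without which Theorem \ref{2.3.2DH} would only exhibit $M$ as a form of $\Gamma_R$ rather than $\Gamma_R$ itself --- requires an $R$-point $t\in T(R)$ above $x$, so as to identify $x^{*}E=t^{*}\overline h^{*}E$ with the underlying module of the corresponding representation. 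Such a point exists because $x^{*}T\to\Spec(R)$ is finite \'etale and $R$ is complete with algebraically closed residue field (hence strictly Henselian), so $x^{*}T$ is a disjoint union of copies of $\Spec(R)$; this must be said explicitly, since it is exactly where the hypotheses on $R$ and $k$ enter your argument.
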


\begin{proof}
In this particular case the main feature is that the finite morphism $h: Z \to X_{R'}$ is already a $H$-torsor for some \emph{finite} group scheme $H$ over $R'$. The equivalence $\sT \simeq \Rep^0_R(M)$ induces an equivalence $\sT_{R'} \equiv \Rep^0_{R'}(M_{R'})$. If $\sH$  is the full subcategory of $\text{Coh}(X_{R'})$ formed by vector bundles trivialized by the $H$-torsor $h$, then we can consider the functor $\gamma^*: \sT_{R'} \to \sH$, that is faithful, because $\gamma$ is faithfully flat. Using \cite[Lemma 2.11]{AE18} we obtain an equivalence $\sH \simeq \Rep^0_{R'}(H)$, and therefore we obtain a functor $\Rep^0_{R'}(M_{R'}) \to \Rep^0_{R'}(H)$, and thus an affine group morphism $H \to M_{R'}$ whose generic fibre $H_{K'} \to (M_{R'})_{K'}$ is an isomorphism. This induces in turn an immersion $R'[M_{R'}] \to R'[H]$, as both are $R'$-flat modules. From the fact that $R'$ is Noetherian it follows that $R'[M_{R'}]$ is a free $R$-module of finite rank too and so is $R[M]$.
\end{proof}

The reader will notice that when the assumptions are comparable the latter is also a consequence of \cite[Theorem 8.10]{HdS19}.

\textsc{Universidad de Costa Rica, Ciudad universitaria Rodrigo Facio Brenes,
Costa Rica.}\\
\textit{E-mail address:} marco.antei@ucr.ac.cr\\

\textsc{Universidad de Costa Rica, Ciudad universitaria Rodrigo Facio Brenes,
Costa Rica.}\\
\textit{E-mail address:} jimmy.calvo@ucr.ac.cr

\end{document}